\setlist[itemize]{leftmargin=*}
\setlist[enumerate]{itemsep=3pt,topsep=3pt}
\setlist[enumerate,1]{label=\rm{(\alph*)}}
\theoremstyle{plain}
\newtheorem*{thm*}{Theorem}
\newtheorem{thm}{Theorem}
\Crefname{thm}{Theorem}{Theorems}
\newtheorem*{lem*}{Lemma}
\newtheorem{lem}[thm]{Lemma}
\Crefname{lem}{Lemma}{Lemmas}
\newtheorem*{claim*}{Claim}
\newtheorem{claim}[thm]{Claim}
\crefname{claim}{Claim}{Claims}
\Crefname{claim}{Claim}{Claims}
\newtheorem{prop}[thm]{Proposition}
\Crefname{prop}{Proposition}{Propositions}
\newtheorem{cor}[thm]{Corollary}
\Crefname{cor}{Corollary}{Corollaries}
\crefname{cor}{Corollary}{Corollaries}
\Crefname{conj}{Conjecture}{Conjectures}
\Crefname{qn}{Question}{Questions}
\newtheorem{obs}[thm]{Observation}
\Crefname{obs}{Observation}{Observations}
\crefname{obs}{Observation}{Observations}
\theoremstyle{definition}
\Crefname{prob}{Problem}{Problems}
\Crefname{defn}{Definition}{Definitions}
\theoremstyle{remark}
\Crefname{rem}{Remark}{Remarks}
\renewenvironment{proof}[1][]{\begin{trivlist}
\item[\hspace{\labelsep}{\bf\noindent Proof#1.\/}] }{\qed\end{trivlist}}
\newcommand{\eps}{\varepsilon}
\renewcommand{\P}{\mathcal{P}}
\newcommand{\N}{\mathbb{N}}
\newcommand{\Pb}[1]{\mathbb{P}\!\left[#1\right]}
\newcommand{\R}{\mathbb{R}}
\DeclareMathOperator{\RN}{RN}
\newcommand{\remove}[1]{}
\newcommand{\ceil}[1]{
    \lceil #1 \rceil
}
\newcommand{\floor}[1]{
    \lfloor #1 \rfloor
}
\newcommand{\bipham}{weakly-Hamiltonian}
\newcommand{\ham}{Hamiltonian}
\newcommand{\es}{\emptyset}
\newcommand{\sm}{\setminus}
\newcommand{\crossedges}[2]{${#1}\text{\,--\,}{#2}$}
\newcommand{\nearbroot}{almost-bipartite}
\newcommand{\farbroot}{far-from-bipartite}
\newcommand{\nearb}[1]{${#1}$-\nearbroot}
\newcommand{\farb}[1]{${#1}$-\farbroot}
\newcommand{\almostbal}[1]{${#1}$-balancing}
\newcommand{\Evrich}{\mathcal{E}_2}
\newcommand{\Evpoor}{\mathcal{E}_1}
\newcommand{\Erich}{E_{\mathrm{rich}}}
\newcommand{\Epoor}{E_{\mathrm{poor}}}
\newcommand{\cmin}{c_{\mathrm{min}}}
\newcommand{\rmax}{r_{\mathrm{max}}}
\newcommand{\liftG}{\bar{G}}
\newcommand{\liftA}{\bar{A}}
\newcommand{\partb}[1]{#1^{(1)}}
\newcommand{\partt}[1]{#1^{(2)}}
\newcommand{\Gsigma}{G_\sigma}
\newcommand{\Gsigmadi}{\vec{G}_\sigma}
\DeclareMathOperator{\disb}{imb}
\DeclareMathOperator{\capacity}{capacity}
\DeclareMathOperator{\val}{value}
\newcommand{\step}[2]{\vspace{6pt}\par\noindent{\bf Step #1:} {\it #2.}\par\vspace{6pt}}
\begin{document}

\title{Cycle partitions of regular graphs}
\author{
    Vytautas Gruslys\thanks{
        %Department of Pure Mathematics and Mathematical Statistics, 	
        %University of Cambridge, 
        %Wilberforce Road, 
        %CB3\;0WB Cambridge, 
        %UK;
        %e-mail:
		%\mbox{\texttt{v.gruslys}\texttt{@dpmms.cam.ac.uk}}\,.
		Email: 
		\texttt{vytautas.gruslys@gmail.com}.
    }
    \and
		Shoham Letzter\thanks{
		Department of Mathematics, 
		University College London, 
		Gower Street, London WC1E~6BT, UK. 
		Email: \texttt{s.letzter}@\texttt{ucl.ac.uk}. 
		Research was supported by Dr.\ Max R\"ossler, by the Walter Haefner Foundation and by the ETH Zurich Foundation.
	}
}
\date{}

\maketitle

\begin{abstract}

    \setlength{\parskip}{\medskipamount}
    \setlength{\parindent}{0pt}
    \noindent

	Magnant and Martin conjectured that the vertex set of any $d$-regular
	graph $G$ on $n$ vertices can be partitioned into $n / (d+1)$ paths
	(there exists a simple construction showing that this bound would be
	best possible). We prove this conjecture when $d = \Omega(n)$, improving
	a result of Han, who showed that in this range almost all vertices of
	$G$ can be covered by $n / (d+1) + 1$ vertex-disjoint paths. In fact,
	our proof gives a partition of $V(G)$ into cycles. We also show that, if
	$d = \Omega(n)$ and $G$ is bipartite, then $V(G)$ can be partitioned
	into $n / (2d)$ paths (this bound is tight for bipartite graphs).

\end{abstract}

\section{Introduction} \label{sec:intro} 
	
	Dirac's classical result states that every graph on $n \ge 3$ vertices
	with minimum degree at least $n/2$ contains a Hamilton cycle. This
	minimum degree condition is best possible, as there is no Hamilton cycle
	in the almost balanced complete bipartite graph $K_{\floor{(n-1)/2},
	\ceil{(n+1)/2}}$ nor in the graph obtained by overlapping two cliques,
	$K_{\floor{(n+1)/2}}$ and $K_{\ceil{(n+1)/2}}$, at a single vertex.
	While this means that Dirac's result cannot be extended to general
	graphs with minimum degree lower than $n/2$, such an extension may be
	possible if certain natural conditions are imposed on the graph. A very
	nice conjecture, posed independently by Bollob\'as \cite{bollobas} and
	H\"aggkvist (see~\cite{jackson}), stated that if $d \ge n/(t+1)$ then
	every $t$-connected $d$-regular graph on $n$ vertices is Hamiltonian.
	It is indeed natural to require the graph be regular so that imbalanced
	complete bipartite graphs are ruled out. Note that the case $t = 1$
	follows directly from Dirac's theorem. 
	
	The conjecture of Bollob\'as and H\"aggkvist has been resolved.  The
	case $t = 2$ was proved by Jackson~\cite{jackson}, following partial
	results of Nash-Williams~\cite{nash-williams}, Erd\H{o}s and
	Hobbs~\cite{erdos-hobbs}, and Bollob\'as and Hobbs
	\cite{bollobas-hobbs}. Jackson's result was strengthened slightly by
	Hilbig~\cite{hilbig}, who showed that there are only two extremal
	examples (that is, $2$-connected $d$-regular graphs with $d \ge n/3 - 1$
	and no Hamilton cycle), namely, the Petersen graph and the graph
	obtained by replacing one vertex of the Petersen graph by a triangle.
	Following a number of partial results by Fan \cite{fan}, Jung
	\cite{jung}, Li and Zhu \cite{li-zhu}, Broersma, van den Heuvel, Jackson
	and Veldman \cite{broersma-heuvel-jackson-veldman}, and Jackson, Li and
	Zhu \cite{jackson-li-zhu}, the case $t = 3$ was recently proved by
	K\"uhn, Lo, Osthus and Staden in two papers where they first obtained an
	asymptotic result \cite{kuhn-lo-osthus-staden-approx} and then the exact
	result (for large $n$) \cite{kuhn-lo-osthus-staden}.  This completed the
	picture regarding the Bollob\'as and H\"aggkvist conjecture, since the
	conjecture is false for $t \ge 4$, as was shown by Jung \cite{jung} and
	by Jackson, Li and Zhu \cite{jackson-li-zhu}. 

	A different direction was suggested by Enomoto, Kaneko and Tuza
	\cite{enomoto-kaneko-tuza}: rather than finding one Hamilton cycle, they
	were interested in finding a small collection of cycles that covers the
	vertex set. More precisely, they conjectured that the vertices of any
	$n$-vertex graph with minimum degree at least $d$ can be covered by at
	most $(n - 1) / d$ cycles, where edges are considered to be cycles on
	two vertices. Note that the case where $d = n/2$ is exactly Dirac's
	theorem. The bound $(n - 1) / d$ cannot be meaningfully lowered, since
	at least $\floor{(n - 1) / d}$ cycles are needed to cover the vertices
	of $K_{n-d, d}$ or of the graph obtained by taking one vertex of full
	degree and covering the other $n - 1$ vertices by $\floor{(n - 1) / d}$
	disjoint cliques, each of order at least $d$.  Following progress by
	Enomoto, Kaneko and Tuza \cite{enomoto-kaneko-tuza} and Kouider
	\cite{kouider}, this conjecture was proved by Kouider and Lonc
	\cite{kouider-lonc} and, much later, but independently, by Balogh,
	Mousset and Skokan \cite{balogh-mousset-skokan} for $d = \Omega(n)$.

	What if the cycles in the conjecture of Enomoto, Kaneko and Tuza are
	required to be vertex-disjoint? In this case imbalanced bipartite graphs
	are again problematic, and so it makes sense to consider regular graphs.
	Magnant and Martin \cite{magnant-martin} conjectured that the vertices
	of any $n$-vertex $d$-regular graph can be covered by at most $n/(d+1)$
	vertex-disjoint paths; this bound is tight as can be seen by taking a
	disjoint union of cliques of order $d+1$ (and, possibly, a larger
	$d$-regular graph on the remaining $d+1$ to $2d+1$ vertices).  They
	proved this conjecture for $d \le 5$ and Han \cite{han} proved that, if
	$d = \Omega(n)$, then all but $o(n)$ vertices can be covered by at most
	$n/(d+1)+1$ paths. It does not seem critical that Magnant and Martin
	stated their conjecture for paths and not for cycles, because (at least
	in dense graphs) typical methods that give path partitions tend to give
	cycle partitions just as well. In this paper we prove Magnant and
	Martin's conjecture when $d = \Omega(n)$ and, indeed, our proof gives a
	partition into cycles. 

	\begin{thm} \label{thm:main}
		For every $\cmin > 0$ there exists $n_0$ such that if $G$ is a
		$d$-regular graph on $n$ vertices, where $n \ge n_0$ and $d \ge \cmin
		n$, then $V(G)$ can be partitioned into at most $n/(d+1)$ cycles.
	\end{thm}

	We also obtain an analogous result for bipartite graphs, but this time
	we only establish the existence of a path partition. The reason why our
	proof does not work for cycles seems to be technical rather than
	essential: we do believe that the same approach can give a proof for
	cycles, provided that some of our lemmas, including the main lemma of
	\Cref{sec:balancing}, are expanded with further technical conditions.
	However, to maintain the readability of this paper, we do not pursue
	this marginally stronger result.
	
	\begin{thm} \label{thm:main-bip}
		For every $\cmin > 0$ there exists $n_0$ such that if $n \ge n_0$,
		$d \ge \cmin n$ and $G$ is a $d$-regular bipartite graph on $n$
		vertices, then $V(G)$ can be partitioned into at most $n / (2d)$
		paths.  
	\end{thm}

	\Cref{thm:main-bip} improves a result of Han~\cite{han}, who proved that
	all but $o(n)$ vertices can be covered by at most $n / (2d)$
	vertex-disjoint paths. The bound $n / (2d)$ can be seen to be tight by
	taking a disjoint union of $\floor{n / (2d)}$ $K_{d,d}$'s (possibly,
	replacing one of them by a slightly bigger $d$-regular bipartite graph,
	making sure that exactly $n$ vertices are used).

	In the following section we outline the proofs and the structure of the
	rest of the paper.  

\section{Overview} \label{sec:stuff}

	\subsection{Outline of the proof}

        Our plan for proving \Cref{thm:main} is as follows. (The proof of
        \Cref{thm:main-bip} is similar and, in fact, slightly simpler.) First,
        we partition the vertices into a small number of parts, which we call
        clusters, that are well-connected and such that there are few edges with
        ends in different clusters (this is made precise in
        \Cref{lem:good-partition}).  K\"uhn, Lo, Osthus and Staden
        \cite{kuhn-lo-osthus-staden,kuhn-lo-osthus-staden-approx} used a similar
        partition. Moreover, the clusters
        in our partition can be shown to be \emph{robust expanders}, a term that
        was introduced by K\"uhn, Osthus and Treglown
        \cite{kuhn-osthus-treglown} and has since proved to be very useful (see,
        for instance,
        \cite{kuhn-osthus,kuhn-mycroft-osthus,kuhn-mycroft-osthus-exact}). 

        We zoom in on each cluster: ideally, we would like each one of them to
        be Hamiltonian and remain Hamiltonian after the removal of any small set
        of vertices. We establish this fact about all clusters that cannot be
        made bipartite by removing a small number of edges. However, the
        statement may fail for other clusters; for example, an imbalanced
        bipartite graph may appear as a cluster, and it is certainly not
        Hamiltonian. For clusters that are almost bipartite we establish a more
        technical statement: they become Hamiltonian after the removal of any
        small set of vertices that balances its two sides. This is done in
		\Cref{lem:rob-ham}, whose proof follows relatively easily from a result in \cite{kuhn-osthus-treglown}. 

        Up to this point our argument mostly follows the strategy in
        \cite{kuhn-lo-osthus-staden}. Our main new ideas are in the proof of the
        next lemma, \Cref{lem:good-linear-forest}, in which we construct a small
        linear forest whose removal balances the clusters that are almost
        bipartite. A similar linear forest was constructed by K\"uhn, Lo, Osthus
        and Staden \cite{kuhn-lo-osthus-staden,kuhn-lo-osthus-staden-approx}.
        However, their approach was more ad hoc and relied on the number of
        clusters being small (namely, at most five), whereas here
        this number can be arbitrarily large.
        
        Upon the removal of the interior vertices of this linear forest, the
        clusters become Hamiltonian; in them we pick Hamilton paths that attach
        to the leaves of the linear forest. This ensures that the paths in the
        linear forest can be concatenated with the Hamilton paths in the
        clusters.  The result is a small family of vertex-disjoint paths --
        containing no more paths than there are clusters  --
        that covers the whole graph. By doing this step carefully, we ensure
        that each path in the family starts and ends at adjacent vertices,
        which means that this family is in fact a family of cycles.

	\subsection{Key lemmas}

        In this subsection we give some definitions and state
        \Cref{lem:good-partition,lem:rob-ham,lem:good-linear-forest}.

		Here and later, we freely use standard definitions in graph theory:
		$e(H)$ denotes the number of edges of a graph $H$ and, for disjoint
		sets $X, Y \subset V(H)$, we denote by $H[X, Y]$ the graph with vertex
		set $X \cup Y$ whose edges are the \crossedges{X}{Y} edges of $H$
		(that is, those edges of $H$ with one end in $X$ and one in $Y$).
		Let $G$ be a graph on $n$ vertices.
        A \emph{cut} of a set $A \subset V(G)$ is a partition $\{X, Y\}$ of $A$,
        where $X$ and $Y$ are both non-empty.  We say that a cut $\{X, Y\}$ is
        \emph{$\alpha$-sparse} if $e(G[X, Y]) \le \alpha|X||Y|$. 
        We say that a set $A \subset V(G)$ is \emph{\nearb{\alpha}} if there
        exists a partition $\{X, Y\}$ of $A$ such that $G[A]$ has at most
        $\alpha n^2$ edges that are not \crossedges{X}{Y} edges. Otherwise,
        we say that $A$ is \emph{\farb{\alpha}}.

		The following lemma, which is very similar to a result from \cite{kuhn-lo-osthus-staden-approx}, partitions the vertices of $G$ into a small number
        of well-behaved sets, which we call \emph{clusters}.
			
        \begin{lem} \label{lem:good-partition}
			Let $\cmin \in (0,1)$ and $n_0 \in \N$ be such that $1/n_0 \ll \cmin$.
			Let $G$ be a $d$-regular graph on $n$ vertices, where $n \ge n_0$ and
			$d \ge \cmin n$.  Then there exist parameters $r \le
			1/\cmin$ and $\eta, \beta, \gamma, \zeta, \delta$, where
			$1/n_0 \ll \eta \ll \beta \ll \gamma \ll \zeta \ll \delta \ll \cmin$, and a partition
			$\{A_1, \ldots, A_r\}$ of $V(G)$ into non-empty sets satisfying the
			following properties: 
            \begin{enumerate} 
                \item \label{itm:few-cross-edges}
                    $G$ has at most $\eta n^2$ edges with ends in different
                    $A_i$'s; 
                \item \label{itm:min-deg}
                    for each $i \in [r]$, the minimum degree of $G[A_i]$ is at
                    least $\delta n$;
                \item \label{itm:no-small-cuts}
                    for each $i \in [r]$, $A_i$ has no $\zeta$-sparse
                    cuts;
                \item \label{itm:close-far-bip}
                    for each $i \in [r]$, $A_i$ is either \nearb{\beta}
                    or \farb{\gamma}.
            \end{enumerate}
        \end{lem}

        The meaning of the symbol $\ll$ requires some clarification. Every
        expression of the form $a \ll b$ should be read as `$a$ is much less
        than $b$'. Formally, it means that $a < \Phi(b)$ where $\Phi : (0,1] \to
        (0,1]$ is a hidden increasing function associated to that particular
        expression.  The hidden functions depend only on the constant $\cmin$,
        and they can be worked out by carefully following the forthcoming
        arguments.  We shall not mention these function again; instead, we shall
        implicitly assume that, as the variable approaches $0$, they decrease
        sufficiently fast to make our calculations work.

		We remark that the statement of \Cref{lem:good-partition} is somewhat unusual in that, given $n_0$, $\cmin$ and $G$ as in the lemma, the conclusion holds for \emph{some} choice of parameters $\eta, \beta, \gamma, \zeta, \delta$, with $1/n_0 \ll \eta \ll \beta \ll \gamma \ll \zeta \ll \delta \ll \cmin$, but not for every choice of such parameters. In particular, the correct choice for parameters depends on the graph $G$.

        Given a graph $G$ on $n$ vertices, a set $A \subset V(G)$ is called \emph{$\xi$-\ham} if, for any subset
        $W$ of size at most $\xi n$ and any pair of distinct vertices $x, y
        \in A \setminus W$, there is a Hamilton path in $G[A \setminus W]$ with
        ends $x, y$.  Given a partition $\{X, Y\}$ of $A$, we say that $A$ is
        \emph{$\xi$-\bipham{} with respect to $\{X, Y\}$} if, for any subset
        $W$ of size at most $\xi n$ that satisfies $|X \setminus W| = |Y
        \setminus W|$ and any vertices $x \in X \setminus W,\; y \in Y \setminus
        W$, there is a Hamilton path in $G[A \sm W]$ with ends $x, y$.

        The following lemma shows that clusters are \ham{} if they are far from
        being bipartite and \bipham{} if they are almost bipartite.

        \begin{lem} \label{lem:rob-ham}
			Let $\cmin \in (0,1)$ and $n \in \N$, and let $\eta, \beta, \xi,
			\gamma, \zeta, \delta$ be real numbers satisfying $1/n \ll \eta \ll
			\beta \ll \xi \ll \gamma \ll \zeta \ll \delta \ll \cmin$.  Let $G$
			be a $d$-regular graph on $n$ vertices, where $d \ge \cmin n$, and
			suppose that $A \subset V(G)$ satisfies the following properties.
            \begin{enumerate}
				\item \label{itm:few-cross-edges-A}
                    there are at most $\eta n^2$ edges in $G$ with exactly one
                    end in $A$;
				\item \label{itm:min-deg-A}
                    $G[A]$ has minimum degree at least $\delta n$;
				\item \label{itm:no-small-cuts-A}
                    $A$ has no $\zeta$-sparse cuts;
				\item \label{itm:close-far-bip-A}
                    $A$ is either \nearb{\beta} or \farb{\gamma}.
            \end{enumerate}
            If $A$ is \farb{\gamma}, then $A$ is $\xi$-\ham; if $A$ is
            \nearb{\beta}, then it is $\xi$-\bipham{} with respect to any
            partition $\{X, Y\}$ of $A$ that maximises the number of
            \crossedges{X}{Y} edges.
        \end{lem}

		When presented with a partition into well-behaved clusters, the next
		lemma produces a collection of vertex-disjoint paths that balances
		the clusters.

		\begin{lem} \label{lem:good-linear-forest}
			Let $\cmin \in (0,1)$ and $n \in \N$, and let $\eta, \beta, \xi,
			\gamma, \zeta, \delta$ be real numbers satisfying $1/n \ll \eta \ll
			\beta \ll \xi \ll \gamma \ll \zeta \ll \delta \ll \cmin$.  Let $G$
			be a $d$-regular graph on $n$ vertices, where $d \ge \cmin n$, and
			let $\{A_1, \ldots, A_r\}$ be a partition of $V(G)$ with 
			properties~\ref{itm:few-cross-edges} to \ref{itm:close-far-bip} in
			\Cref{lem:good-partition}, where $r \le \ceil{1 / \cmin}$. For each $i \in [r]$ such that $A_i$ is
			\nearb{\beta}, let $\{X_i, Y_i\}$ be a partition of $A_i$ that
			maximises the number of \crossedges{X_i}{Y_i} edges.  Then there
			is a linear forest $H \subset G$ with the following properties: 
			\begin{enumerate}
				\item \label{itm:H-small}
					$|H| \le \xi n$;
				\item \label{itm:H-no-isolated}
					$H$ has no isolated vertices;
				\item \label{itm:H-two-ends}
					for each $i \in [r]$, $A_i$ contains either two or zero leaves of $H$;
				\item \label{itm:H-ends-equally}
					for each $i \in [r]$ such that $A_i$ is \nearb{\beta},
					either $A_i$ contains no leaves of $H$, or $X_i$ and
					$Y_i$ each contain exactly one leaf of $H$;
				\item \label{itm:H-balances}
					for each $i \in [r]$ such that $A_i$ is \nearb{\beta},
					$|X_i \sm V(H)| = |Y_i \sm V(H)|$.
			\end{enumerate}
		\end{lem}
	\subsection{Proof of the main result}

		We now complete the proof of \Cref{thm:main}, using
		\Cref{lem:good-partition,lem:rob-ham,lem:good-linear-forest}. The
		proof mostly puts the three lemmas together, but we need to work a
		bit to get the exact right number of cycles. The lemmas themselves
		will be proved in forthcoming sections.

		\begin{proof}[ of \Cref{thm:main}]
			Let $\cmin > 0$; we assume, without loss of generality, that $1 /
			\cmin \in \N$. Let $n_0 \in \N$ satisfy $1 / n_0 \ll \cmin$, and
			let $G$ be a $d$-regular graph on $n$ vertices, where $n \ge n_0$
			and $d \ge \cmin n$.  Let $\{A_1, \ldots, A_r\}$ be a partition of
			$V(G)$ produced by \Cref{lem:good-partition}; this partition comes
			with parameters $1/n_0 \ll \eta \ll \beta \ll \gamma \ll \zeta \ll
			\delta \ll \cmin$. Set  $l = \floor{n / (d + 1)}$ and let $\alpha$
			be such that $\delta \ll \alpha \ll \cmin$.  
			
			For the moment, we fix a single index $i \in [r]$. By
			property~\ref{itm:min-deg} in \Cref{lem:good-partition}, $|A_i| \ge
			\delta n$. Hence, by property~\ref{itm:few-cross-edges}, there
			exists a vertex $u \in A_i$ incident with at most $(\eta / \delta)
			n$ edges of $G$ that leave $A_i$. Therefore, $u$ has at least $d -
			(\eta / \delta) n$ neighbours in $A_i$, and so $|A_i| \ge d - (\eta
			/ \delta) n \ge d \left( 1 - \eta / (\delta \cmin) \right) \ge (1 -
			\alpha) d$ (using $d \ge \cmin n$ and $\eta \ll \beta \ll \alpha
			\ll \cmin$). More can be said if $A_i$ is \nearb{\beta}. In such
			case we fix a partition $\{X_i, Y_i\}$ of $A_i$ that maximises the
			number of \crossedges{X_i}{Y_i} edges in $G$. In particular,
			$G[X_i, Y_i]$ can be obtained from $G[A_i]$ by removing at most
			$\beta n^2$ edges. Similarly to the argument above, there exists a
			vertex in $A_i$, say in $X_i$, with at least $d - \left((\eta +
			2\beta) / \delta \right) n$ neighbours in $G[X_i, Y_i]$, which
			means that $|Y_i| \ge d(1 - (3\beta / \delta \cmin)) \ge (1 -
			\alpha)d$.  Therefore, some vertex in $Y_i$ has at least $d - (\eta
			+ 2\beta) n^2 / (1 - \alpha) d$ neighbours in $G[X_i, Y_i]$, which
			implies that $|X_i| \ge d \left( 1 - 3 \beta / (1 - \alpha) \cmin^2
			\right) \ge (1 - \alpha)d$. We conclude that $|A_i| \ge (1 -
			\alpha) d$ in general and $|A_i| \ge 2 (1 - \alpha) d$ if $A_i$ is
			\nearb{\beta}.
			
			Since $i \in [r]$ was arbitrary, we have
			$n \ge (r + s) (1 - \alpha) d$, where $s$ is the number of
			\nearb{\beta} $A_i$'s. It follows that $r + s \le l + 1$: this can
			be seen by bounding the difference
			\begin{equation*}
				r + s - l \le
				\left\lfloor \frac{n}{(1 - \alpha) d} - \left\lfloor \frac{n}{d + 1} \right\rfloor \right\rfloor \le
				\left\lfloor \frac{\alpha d n + n}{(d + 1) d (1 - \alpha)} + 1 \right\rfloor
					\le
				\left\lfloor \frac{\alpha + 1 / n}{(\cmin)^2 (1 - \alpha)} + 1 \right\rfloor
				= 1.
			\end{equation*}

			The rest of the proof splits into two cases: when $r \le l$ and
			when $r = l + 1$. We first deal with the former case, which is
			critical, but easy to resolve using \Cref{lem:good-linear-forest}.
			We fix an arbitrary number $\xi$ such that $\beta \ll \xi \ll
			\gamma$.  Let $H$ be a linear forest as produced by
			\Cref{lem:good-linear-forest} (for each \nearb{\beta} $A_i$ we use
			the partition $\{X_i, Y_i\}$ that was defined earlier in the
			proof), and we denote by $I$ the set of internal vertices of $H$.
			For each $i \in [r]$, if $A_i$ contains two leaves of $H$, then let
			$x_i, y_i$ be those leaves. Otherwise, let $x_i, y_i \in A_i \sm I$
			be arbitrary adjacent vertices. Recall that $|I| \le \xi n$ by
			property~\ref{itm:H-small} in \Cref{lem:good-linear-forest}. We
			make two further observations if $A_i$ is \nearb{\beta}. First,
			property~\ref{itm:H-ends-equally} in \Cref{lem:good-linear-forest}
			enables us to assume that $x_i \in X_i$ and $y_i \in Y_i$. Second,
			properties~\ref{itm:H-ends-equally} and \ref{itm:H-balances} in
			\Cref{lem:good-linear-forest} imply that $|X_i \sm I| = |Y_i \sm
			I|$. Now, we apply \Cref{lem:rob-ham} and conclude that, regardless
			of $A_i$ being \nearb{\beta} or \farb{\gamma}, $G[A_i \sm I]$ has a
			Hamilton path with ends $x_i, y_i$. We take these paths for all $i
			\in [r]$: some of them can be concatenated with the path components
			of $H$, while the others have adjacent ends and so can be completed
			into cycles.  The result is a family of cycles that partitions
			$V(G)$. Note that the number of cycles in this family does not
			exceed the number of clusters, which is $r \le l$.
		
			We move on to the next case, that is, when $r = l + 1$. This
			immediately implies that $s = 0$, meaning that all $A_i$'s are
			\farb{\gamma}. Suppose that there is a matching of size $2$ between
			two distinct clusters $A_i, A_j$, and denote its edges by $x_i x_j$
			and $y_i y_j$, where $x_i, y_i \in A_i$ and $x_j, y_j \in A_j$. By
			\Cref{lem:rob-ham}, for each $k \in \{i,j\}$ there is a Hamilton
			path in $G[A_k]$ with ends $x_k$ and $y_k$. Together with the edges
			$x_i y_i$ and $x_j y_j$, we obtain a cycle whose vertex set is $A_i
			\cup A_j$. For every $k \neq i, j$, we use \Cref{lem:rob-ham} again
			to find a cycle with vertex set $A_k$. In total we obtain a
			partition of the vertices into $l$ cycles.

			Now, let us assume for contradiction that there are no two distinct
			clusters with a matching of size $2$ between them (i.e.\ the non-isolated vertices of $G[A_i, A_j]$ form a star for every $i \neq j$). We construct an
			auxiliary digraph $H$ on vertices $V(G)$, whose arcs correspond to
			edges of $G$ that join separate clusters. More precisely, for any
			distinct $i, j \in [r]$ such that $G$ contains \crossedges{A_i}{A_j} we do the following. If there is exactly one \crossedges{A_i}{A_j} edge $xy$, we add both $xy$ and $yx$ to $H$. Otherwise, since $G[A_i, A_j]$ is a star with at least two edges, there is a unique vertex $a \in A_i \cup A_j$ such that all \crossedges{A_i}{A_j} edges are incident with $a$. Add to $H$ all \crossedges{A_i}{A_j} edges as arcs directed towards $a$.
			
			In order to complete the proof, we reach a contradiction by a double-counting argument. Intuitively, the structure of $H$ suggests that there are few edges with ends in distinct parts $A_i$, but the assumption that there are $r = l+1$ parts $A_i$ implies that there are relatively many such edges.
			Fix $i \in [r]$. Let $a_i$ denote the number of arcs in $H$ that
			enter $A_i$ and let $b_i$ denote the number of arcs that leave
			$A_i$. Our most immediate aim is to establish the inequality
			\begin{equation} \label{ineq:inarcs-vs-outarcs}
				b_i \ge (d - 2l) (d + 1 - |A_i|) + a_i - 2l^2.
			\end{equation}
			To this aim, we first observe that $|A_i| \ge d - l$, or else in $G$
			every vertex of $A_i$ would send at least $l + 1$ edges to the other
			clusters. By the pigeonhole principle, at least two of these edges
			would end in the same cluster, and hence, again by the pigeonhole
			principle, there would be a cluster $A_j$, $j \neq i$, such that at
			least $|A_i| / l \ge 2$ vertices in $A_i$ send at least two edges to
			$A_j$.  However, this would contradict the assumption that there is
			no matching of size $2$ between any two clusters. 
			Furthermore, for
			any $j \neq i$, all arcs of $H$ that go from $A_j$ to $A_i$ have the
			same head. Therefore, there are at least $d - 2l$ vertices in $A_i$
			of zero in-degree in $H$. We pick a set $Z \subset A_i$ consisting
			of exactly $d - 2l$ such vertices.

			We write $m$ for the number of \crossedges{(A_i \sm Z)}{Z} edges
			missing from $G$ and denote the number of vertices in $A_i$ of
			non-zero in-degree in $H$ by $k$. We already know that $k \le l$. In
			$G$, these vertices together send at least $a_i$ edges outside of
			$A_i$, and so they send at most $kd - a_i$ edges to $Z$.  Therefore,
			$m \ge k (d - 2l) - (kd - a_i) \ge a_i - 2 l^2$.  Since $\sum_{z \in
			Z} |N_G(z) \cap A_i| \le |Z| (|A_i| - 1) - m$, there are at least
			$|Z|(d + 1 - |A_i|) + m \ge (d - 2l)(d + 1 - |A_i|) + a_i - 2 l^2$
			edges from $Z$ to $V(G) \setminus A_i$. They become arcs of $H$ directed away from
			$A_i$, proving inequality~\eqref{ineq:inarcs-vs-outarcs}.

			Summing inequality~\eqref{ineq:inarcs-vs-outarcs} over $i \in [r]$,
			we get
			\begin{equation*}
				0 =
				\sum_{i = 1}^r (b_i - a_i) \ge
				(d - 2l) \left( (d + 1)r - n \right) - 2l^2 r.
			\end{equation*}
			Since $r = \floor{n / (d + 1)} + 1 > n / (d + 1)$, we have $(d + 1)r
			- n \ge 1$, and hence the right hand side of the inequality above is
			at least $\cmin n - 2l - 2l^2 (l + 1) > 0$, giving a
			contradiction.
		\end{proof}

		In the proof of our main theorem, which we have just completed, we
		partitioned $V(G)$ into at most $l = \floor{n / (d + 1)}$ cycles. This
		proof can be tweaked so that exactly $l$ cycles are guaranteed: if
		the original proof produces $l' < l$ cycles, then before invoking
		\Cref{lem:rob-ham} to find Hamilton paths in the clusters, we can
		first take aside $l - l'$ very short cycles in one of the clusters
		(short cycles exist in clusters by \Cref{prop:short-connection}).

		If, instead of cycles, we wanted to partition $V(G)$ into (at most)
		$l$ paths, then the analysis of the case $r = l + 1$ in the proof of
		\Cref{thm:main} would be simpler. Indeed, instead of finding a
		matching of size $2$ between two clusters it would be enough to find
		a single edge.

	\subsection{Proof of the bipartite analogue}

		We now prove \Cref{thm:main-bip}, which is the bipartite analogue of
		our main result. As long as we have
		\Cref{lem:good-partition,lem:rob-ham,lem:good-linear-forest} at our
		disposal, the proof is straightforward, but, again, some care is
		needed to obtain the exactly tight bound.

		\begin{proof} [ of \Cref{thm:main-bip}] 
			Let $\cmin$ be such that $1/\cmin \in \N$, and suppose that $1 / n_0 \ll \cmin$.
			Let $G$ be a bipartite $d$-regular graph on $n$ vertices, where $n \ge n_0$ and $d \ge \cmin n$.
			Let $X, Y$ be the vertex classes of $G$ and write $l = \floor{n
			/ (2d)}$. Let $\{A_1, \ldots, A_r\}$ be a partition of $V(G)$ as
			given by \Cref{lem:good-partition}, where $1/n_0 \ll \eta \ll
			\zeta \ll \delta \ll \cmin$ are the corresponding parameters ($\beta$ and
			$\gamma$ do not play a role here as the graph is bipartite). The
			argument that applied to \nearb{\beta} clusters in the proof of
			\Cref{thm:main} also works here and it shows that given $\alpha$ that satisfies $\delta \ll \alpha \ll \cmin$, we have $|A_i| \ge 2d(1 -
			\alpha)$ for all $i \in [r]$. Therefore,
			\begin{equation*}
				r - l \le
				\left \lfloor
					\frac{n}{2d(1 - \alpha)} - \frac{n}{2d} + 1
				\right \rfloor \le
				\left \lfloor
					\frac{\alpha}{2 \cmin (1 - \alpha)} + 1
				\right \rfloor =
				1.
			\end{equation*}
			
			Let $\xi$ be a parameter satisfying $\eta \ll \xi \ll \zeta$ and
			for each $i \in [r]$ fix the partition $\{X_i, Y_i\}$ for $A_i$,
			where $X_i = A_i \cap X$, $Y_i = A_i \cap Y$.  Let $H$ be a
			linear forest as given by \Cref{lem:good-linear-forest}.
			Precisely as in the proof of \Cref{thm:main}, by concatenating
			components of $H$ and paths in the clusters, we partition $V(G)$
			into at most $r$ cycles. Furthermore, if at least one component
			of $H$ has ends in separate clusters, then the partition
			contains at most $r - 1$ cycles. Therefore, we may assume that
			$r = l + 1$ and both ends of each component of $H$ are in the
			same cluster, as otherwise we are done.
			
			Now, suppose that $H$ has an edge $uv$ with ends in separate
			clusters, say, $u \in X_1$, $v \in Y_2$. Let $P$ be the
			component of $H$ that contains $uv$, and let $x_1, y_1$ be the
			ends of $P$ in $X_1, Y_1$, respectively (both parts of $A_1$
			contain an end of $P$ by property~\ref{itm:H-ends-equally} in
			\Cref{lem:good-linear-forest}). We write $P_u, P_v$ for the two
			paths comprising $P \sm \{uv\}$, where $P_u$ contains $u$ and
			$P_v$ contains $v$ ($P_u$ and/or $P_v$ is a single vertex if $u$
			and/or $v$ is an end of $P$). We select a vertex $x_2 \in X_2$
			in the following way: if $H$ has a component with ends in $A_2$,
			then we let $x_2$ be its end in $X_2$; otherwise, we pick $x_2$
			arbitrarily.  Note that $|(X_1 \sm V(H)) \cup \{x_1\}| = |(Y_1
			\sm V(H)) \cup \{y_1\}|$ by property~\ref{itm:H-balances} in
			\Cref{lem:good-linear-forest}, and hence \Cref{lem:rob-ham}
			produces a path with ends $x_1, y_1$ that spans $(A_1 \sm V(H))
			\cup \{x_1, y_1\}$. Similarly, there is a path spanning $(A_2
			\sm V(H)) \cup \{x_2, v\}$ that has ends $x_2, v$. Let $P^\ast$
			be the concatenation of $P_u$ with the newly produced path
			between $x_1, y_1$, with $P_v$, with the newly produced path
			between $v, x_2$ and, if it exists, with the component of $H$
			whose one end is $x_2$. We observe that $P^\ast$ is a path that
			covers $A_1 \cup A_2$ except for the vertices that appear in
			components of $H$ with ends in clusters other than $A_1, A_2$.
			Outside of $A_1 \cup A_2$, $P^\ast$ covers the vertices
			contained in components of $H$ with ends in $A_1, A_2$. We deal
			with the clusters $A_i$ for $i \ge 3$ in the same way as in the
			proof of \Cref{thm:main}.  This gives a partition of $V(G)$ into
			the path $P^\ast$ and at most $r - 2 = l - 1$ cycles, proving
			the result.

			The final case to consider is when $r = l + 1$ and $H$ has no
			edges with ends in separate clusters. By
			property~\ref{itm:H-ends-equally} in
			\Cref{lem:good-linear-forest}, every component of $H$ covers the
			same number of vertices in both parts of the graph. Therefore,
			for each $i \in [r]$, $|X_i| = |X_i \sm V(H)| + |X_i \cap V(H)|
			= |Y_i \sm V(H)| + |Y_i \cap V(H)| = |Y_i|$. In other words,
			each cluster is balanced. Since $r > n / (2d)$, we may assume
			that $|A_1| < 2d$, and so $|X_1| = |Y_1| < d$. By the regularity
			of $G$, there exists an edge $uv \in E(G)$ with $u \in X_1$ and
			$v$ not in $Y_1$. Say, $v \in Y_2$. By \Cref{lem:rob-ham}, for
			each $i \in [r]$ we may pick a path $P_i$ spanning $A_i$, where
			$u$ is an end of $P_1$ and $v$ is an end of $P_2$. This gives a
			partition of $V(G)$ into $r - 1 = l$ paths, namely, $P_1 uv
			P_2$, $P_3, \dotsc, P_r$.
		\end{proof}

		We remark that a possible strategy for proving a stronger version of
		\Cref{thm:main-bip} that establishes a partition of $V(G)$ into at
		most $\floor{n / (2d)}$ cycles may revolve around moving a small
		number of vertices from some clusters to others, so that the
		clusters still satisfy the assumptions of \Cref{lem:rob-ham}, but
		the balancing linear forest now has a component with ends in
		separate clusters. We believe that we have a good idea on how such a
		proof would work -- a more technical version of
		\Cref{lem:good-linear-forest} is needed -- but we decided not to
		pursue it.
		
	\subsection{Structure of the paper}
	
	We prove \Cref{lem:good-partition,lem:rob-ham,lem:good-linear-forest} in \Cref{sec:clusters,sec:ham,sec:balancing}, respectively, and conclude the paper in \Cref{sec:conclusion} with closing
		remarks and open problems.

\section{Partitioning the graph into well-behaved clusters} \label{sec:clusters}

	\def \rmax{r_0}

	In this section we prove \Cref{lem:good-partition}. 
	This lemma is very similar to Theorem 3.1 in \cite{kuhn-lo-osthus-staden-approx}\footnote{The main conceptual difference is that we prove that each set $A_i$ has no sparse cuts, whereas in \cite{kuhn-lo-osthus-staden-approx} it is proved that each $G[A_i]$ is a robust expander; in fact, the latter would work for us as well, but we chose the former to simplify the presentation.}. Nevertheless, as the proof in \cite{kuhn-lo-osthus-staden-approx} is quite long, we give a proof here.

	\begin{proof}[ of \Cref{lem:good-partition}]
		Set $\rmax := \ceil{1/\cmin}$ and fix positive constants $n_0$ and $\eta_1, \dotsc,
		\eta_{\rmax}$ that satisfy the hierarchy $1 / n_0 \ll \eta_1
		\ll \dotsb \ll \eta_{r_0} \ll \cmin$. Let $G$ be a $cn$-regular
		graph on $n \ge n_0$ vertices, where $c \ge \cmin$. We shall define a
		list $\P_1, \dotsc, \P_r$, where $1 \le r \le \rmax$, of
		increasingly refined partitions of $V(G)$ such that the following
		properties hold for each $i \in [r]$:
		\begin{enumerate}[label = \rm{(\roman*)}]
			\item \label{itm:partn-one}
				$\P_i$ is a partition of $V(G)$ consisting of $i$ non-empty
				parts;
			\item \label{itm:partn-two}
				if $i \ge 2$, then $\P_i$ is obtained by splitting one part of
				$\P_{i-1}$ into two;
			\item \label{itm:partn-three}
				$G$ has at most  $4 \sqrt{\eta_{i-1}} n^2$ edges with ends in
				different parts of $\P_i$ (where $\eta_0 = 0$ by convention);
			\item \label{itm:partn-four}
				for every $A \in \P_i$, the minimum degree of $G[A]$ is at least
				$3^{-(i-1)} cn$;
			\item \label{itm:partn-terminates}
				every part of $\P_r$ has no $\eta_r$-sparse cuts.
		\end{enumerate}
			Let $\P_1 = \{V(G)\}$ and note that $\P_1$ trivially satisfies the
			first four conditions. Assuming that $\P_i$ is defined,
			we define $\P_{i+1}$ in the following way. If every part of $\P_i$
			has no $\eta_i$-sparse cut, then we set $r=i$ and stop the process.
			Otherwise, we pick a part $A \in \P_i$ that has an $\eta_i$-sparse
			cut $\{A_1, A_2\}$.  In $A_1$, we let $A_1'$ be the set of vertices
			that have at most $\sqrt{\eta_i} n$ neighbours in $A_2$; similarly,
			we denote by $A_2'$ the set of vertices in $A_2$ that have at most
			$\sqrt{\eta_i} n$ neighbours in $A_1$. Since $\{A_1, A_2\}$ is an
			$\eta_i$-cut of $A$, we have $\sqrt{\eta_i} n |A \sm (A_1' \cup
			A_2')| \le 2\eta_i n^2$, and hence $|A \sm (A_1' \cup A_2')| \le
			2\sqrt{\eta_i} n$.  Since every vertex in $A$ has at least
			$3^{-(i-1)} cn$ neighbours in $A$ and since all but at most $2
			\sqrt{\eta_i} n < 3^{-i} cn$ of them are in $A_1' \cup A_2'$, every
			vertex in $A$ has at least $3^{-i} cn$ neighbours in $A_j'$ for some
			$j \in \{1,2\}$. In particular, $G[A_1']$ and $G[A_2']$ both have
			minimum degree at least $3^{-i} cn$. Furthermore, we can partition
			$A \sm (A_1' \cup A_2')$ into sets $A_1'', A_2''$ where for each $j
			\in \{1, 2\}$ every vertex in $A_j''$ has at least $3^{-i} cn$
			neighbours in $A_j'$. We define $\P_{i+1}$ by replacing the part $A$
			in $\P_i$ with two parts $A_1' \cup A_1''$ and $A_2' \cup A_2''$. It is
			clear that $\P_{i+1}$ satisfies properties~\ref{itm:partn-one},
			\ref{itm:partn-two} and \ref{itm:partn-four}.

			We now prove that $\P_{i+1}$ satisfies property~%
			\ref{itm:partn-three}, provided that $i \le \rmax$ (we
			will show in the next paragraph that the process in fact terminates
			at some $\P_r$ with $r \le \rmax$).
			The number of edges between $A_1' \cup A_1''$ and $A_2' \cup
			A_2''$ is at most $\eta_i n^2 + |A_1''\cup A_2''| cn \le
			(\eta_i + 2 \sqrt{\eta_i}) n^2 \le 3 \sqrt{\eta_i} n^2$. Hence, by
			property~\ref{itm:partn-three} of $\P_i$ and by the assumption
			that $\eta_{i-1} \ll \eta_i$, the number of edges between the parts
			of $\P_{i+1}$ is at most $(4 \sqrt{\eta_{i-1}} + 3\sqrt{\eta_i})n^2
			\le 4 \sqrt{\eta_i} n^2$, as desired.

			If the process does not terminate for any $i \le \rmax$,
			then we create a partition $\P_{\rmax + 1}$ that satisfies
			properties~\ref{itm:partn-one} to \ref{itm:partn-four}. We will
			show that such a partition is impossible. Let $A$ be a part of
			$\P_{\rmax + 1}$ of the least order. Clearly, $|A| \le n
			/ ((1/c) + 1) = cn/(c+1)$, and so every vertex in $A$ has at least $cn
			( 1 - 1/(c+1)) = c^2 n / (c+1) \ge \cmin^2 n / 2$ neighbours outside
			of $A$.  Moreover, property~\ref{itm:partn-four} implies that $|A|
			\ge 3^{-\rmax} \cdot \cmin n$. Therefore,
			property~\ref{itm:partn-three} implies that
			\begin{equation*}
				3 \sqrt{\eta_{\rmax}} n^2
				\;\ge\; |A| \cdot (\cmin^2 \, n/2)
				\;\ge\; \frac{1}{2}\, 3^{-\rmax} \, \cmin^2 \, n^2,
			\end{equation*}
			contradicting the assumption that $\eta_{\rmax} \ll \cmin$.

			Consider the final partition $\P_r$. It consists of $r \le \rmax$ parts, none of which have $\eta_r$-sparse cuts.  We set
			$\zeta = \eta_r,\, \eta = 3 \sqrt{\eta_{r-1}},\, \delta = 3^{-r} c$
			and observe that $\P_r$ satisfies
			properties~\ref{itm:few-cross-edges} to \ref{itm:no-small-cuts} in
			\Cref{lem:good-partition}. For property~\ref{itm:close-far-bip},
			we fix positive coefficients $\beta_0, \dotsc, \beta_{r+1}$ that
			depend only on $\cmin$ and $r$, satisfying $3\sqrt{\eta_{r-1}} =
			\eta \ll \beta_0 \ll \dotsb \ll \beta_{r+1} \ll \zeta = \eta_r$.
			For $i \in \{0, \ldots, r+1\}$, let $b(i)$ be the number of parts
			$A$ in $\P_r$ that are \nearb{\beta_i}. Note that if $A$ is
			\nearb{\beta_i} it is also \nearb{\beta_{i+1}}. In particular, $0 \le
			b(0) \le \ldots \le b(r+1) \le r$. It follows that there exists $i
			\in \{0, \ldots, r\}$ with $b(i) = b(i+1)$; fix one such $i$. Then
			every part $A$ in $\P_r$ is either \nearb{\beta_i} or
			\farb{\beta_{i+1}}. Therefore, we can finish the proof by setting
			$\beta = \beta_i$ and $\gamma = \beta_{i+1}$.
		\end{proof}

\section{Hamiltonicity of clusters} \label{sec:ham}

	In this section we prove \Cref{lem:rob-ham}.
	Our proof relies on known results\footnote{In \href{https://https://arxiv.org/abs/1808.00851v1}{\textcolor{blue}{arXiv1808.00851v1}} we prove \Cref{lem:rob-ham} from scratch.} regarding the Hamiltonicity of so-called
	robust out-expanders, a notion that was introduced by K\"uhn, Osthus and
	Treglown \cite{kuhn-osthus-treglown}. Before mentioning the relevant result, we make some definitions. 
	
	Given a digraph $G$
	on $n$ vertices, a set of vertices $S$ and a parameter $\nu \in (0, 1)$,
	the \emph{robust $\nu$-out-neighbourhood} of $S$ in $G$, denoted
	$\RN^+_{\nu, G}(S)$, is the set of vertices in $G$ that have at least $\nu
	n$ in-neighbours in $S$; we omit the subscript $G$ when it is clear from
	the context.  Given $0 < \nu \le \tau < 1$, we say that $G$ is a
	\emph{robust $(\nu, \tau)$-out-expander} if $|\RN^+_{\nu}(S)| \ge |S| + \nu n$
	for every set of vertices $S$ with $\tau n \le |S| \le (1 - \tau) n$.  We
	shall also use the undirected version of a robust out-neighbourhood: in a
	graph $G$ on $n$ vertices, the \emph{robust $\nu$-neighbourhood} of a set
	of vertices $S$, denoted $\RN_{\nu, G}(S)$ is the set of vertices in $G$
	with at least $\nu n$ neighbours in $S$; as before we sometimes omit the
	subscript $G$.

	We shall use the following theorem from \cite{kuhn-osthus-treglown}; recall
	that $\delta^0(G) = \min\{ \delta^+(G), \delta^-(G)\}$, where $\delta^+(G),
	\delta^-(G)$ are the minimum out-degree and in-degree of $G$, respectively.
	\begin{thm} \label{thm:robust-expanders-hamilton}
		Let $n_0 \in \N$ and let $\gamma, \nu, \tau$ be reals such that $1 /
		n_0 \ll \nu \le \tau \ll \gamma < 1$. Let $G$ be a digraph on $n \ge
		n_0$ vertices with $\delta^0(G) \ge \gamma n$ which is a robust $(\nu,
		\tau)$-out-expander. Then $G$ contains a Hamilton cycle.
	\end{thm}

	In fact, we shall need the following corollary.
	\begin{cor} \label{cor:robust-expanders-hamilton}
		Let $n_0 \in \N$ and let $\gamma, \nu, \tau$ be reals such that $1 /
		n_0 \ll \nu \le \tau \ll \gamma < 1$. Let $G$ be a digraph on $n \ge
		n_0$ vertices with $\delta^0(G) \ge \gamma n$ which is a robust $(\nu,
		\tau)$-out-expander. Then for every choice of distinct vertices $x, y$, there is a Hamilton path in $G$ with ends $x, y$.
	\end{cor}

	\begin{proof}
		Given vertices $x, y$, form $G'$ by adding the arc $xy$ to $G$, removing the arc $yx$ (if it exists), and removing all edges directed towards $y$ or from $x$.
		Next, form $G''$ by contracting the arc $xy$. It is easy to check that $G''$ is a robust $(\nu/2, 2\tau)$-out-expander. Thus, by \Cref{thm:robust-expanders-hamilton}, it contains a Hamilton cycle. This cycle corresponds to a Hamilton cycle in $G'$ which contains the arc $xy$, which in turn corresponds to a Hamilton path in $G$ with ends $x,y$.
	\end{proof}

	\begin{proof} [ of \Cref{lem:rob-ham}]
		Let $A$ satisfy properties \ref{itm:few-cross-edges-A} to
		\ref{itm:close-far-bip-A} in \Cref{lem:rob-ham}; if $A$ is
		\nearb{\beta}, let $\{X, Y\}$ be a partition of $A$ that maximises the
		number of \crossedges{X}{Y} edges. Let $W \subset A$ be a set of size at
		most $\eta n$; if $A$ is \nearb{\beta} we further assume that $|X
		\setminus W| = |Y \setminus W|$. Let $H$ be the subgraph of $G$ defined
		as follows: if $A$ is \farb{\gamma} set $H = G[A']$, and otherwise set
		$H = G[X', Y']$, where $A' = A \setminus W$, $X' = X \setminus W$ and
		$Y' = Y \setminus W$. 	

		The following claim will allow us to use \Cref{cor:robust-expanders-hamilton} above; its proof is somewhat technical. 

		\begin{claim} \label{claim:robust-nbd}
			Let $S \subset A'$ be a set satisfying $\xi^{1/7} |A'| \le |S| \le (1 - \xi^{1/7})|A'|$ if $A$ is \farb{\gamma}, or $\xi^{1/7} |A'| \le |S| \le (1/2 - \xi^{1/7})|A'|$ if $A$ is \nearb{\beta}.
			Then $\RN_{\xi, H}(S) \ge |S| + \xi n$.
		\end{claim}	

		\begin{proof}
			We define $S_1 = S \setminus \RN_{\xi}(S)$, $S_2 = S \cap \RN_{\xi}(S)$, $T_1 = \RN_{\xi}(S) \setminus S$, $T_2 = A' \setminus (S \cup T_1)$. We assume that $|\RN_{\xi}(S)| < |S| + \xi n$, which implies that $|T_1| < |S_1| + \xi n$.
			Write $V = V(G)$. Given sets $X, Y \subset V$, let $e(X, Y)$ be the number of ordered pairs $xy$ such that $xy$ is an edge of $G$ and $x \in X, y \in Y$.
			\begin{equation} \label{eqn:robust-a}
				e(S_1, V \setminus T_1) \le e(A, V \setminus A) + e(W, V \setminus W) + e(S_1, S) + e(S_1, T_2) \le 5 \xi n^2, 
			\end{equation}
			where we used property~\ref{itm:few-cross-edges-A} in \Cref{lem:rob-ham}, the assumption that $|W| \le \xi n$, the fact that vertices in $S_1 \cup T_2$ are not in $\RN_{\xi}(S)$, and the fact that $H$ is obtained from $G[A']$ by removing at most $\beta n^2$ edges.
			It follows that $e(S_1, T_1) \ge |S_1| d - 5 \xi n^2$. As $|T_1| \le |S_1| + \xi n$, we obtain the following bound.
			\begin{equation} \label{eqn:robust-b}
				e(T_1, V \setminus S_1) \le |T_1| d  - e(S_1, T_1) 
				\le (|T_1| - |S_1|)d + 5 \xi n^2 
				\le 6 \xi n^2.
			\end{equation}
			Consider the quantity $e(S_1 \cup T_1, A \setminus (S_1 \cup T_1))$. 
			By \eqref{eqn:robust-a} and \eqref{eqn:robust-b}, it is at most $11 \xi n^2$, and by property \ref{itm:no-small-cuts-A} in \Cref{lem:rob-ham}, it is at least $\zeta |S_1 \cup T_1| (|A \setminus (S_1 \cup T_1)|)$.
			As $\xi \ll \zeta$, we find that either $|S_1 \cup T_1| \le \xi^{1/3} n$ or $|A \setminus (S_1 \cup T_1)| \le \xi^{1/3} n$.

			Suppose first that $|S_1 \cup T_1| \le \xi^{1/3} n$. Then
			\[
				e(S_2, A \setminus S_2) \le e(W, V) + e(S_1 \cup T_1, V) + e(S_2, T_2) \le 2\xi^{1/3} n^2,
			\]
			using $|W| \le \xi n$, $|S_1 \cup T_1| \le \xi^{1/3} n$, and $T_2 \cap \RN_{\xi}(S_2) = \emptyset$.
			But, by the assumptions of \Cref{claim:robust-nbd}, $|S_2| \ge \xi^{1/7} |A'| - |S_1| \ge (\xi^{1/7} / 2)n$, and $|A' \setminus S_2| \ge \xi^{1/7}|A|$, thus by property~\ref{itm:no-small-cuts-A} in \Cref{lem:rob-ham} we have $e(S_2, A \setminus S_2) \ge \zeta |S_2| \cdot |A \setminus S_2| > 2 \xi^{1/3} n^2$, a contradiction.

			Next, suppose that $|A \setminus (S_1 \cup T_1)| \le \xi^{1/3} n$. If $A$ is \nearb{\beta} then $|S_1 \cup T_1| \le 2|S_1| + \xi n \le (1 - 2 \xi^{1/7}) |A'| + \xi n < |A| - \xi^{1/3} n$, a contradiction. So $A$ is \farb{\gamma}. Note that $G[A]$ can be made bipartite by removing edges incident with $W \cup S_2 \cup T_2$ or within $S_1$ or $T_1$. But there are at most $(\eta + \xi^{1/3})n^2$ edges of the former type, and at most $11 \xi n^2$ edges of the latter type (by \eqref{eqn:robust-a} and \eqref{eqn:robust-b}), so fewer than $\gamma n^2$ edges in total (using $\xi, \eta \ll \gamma$). This is a contradiction to the fact that $A$ is \farb{\gamma}, completing the proof.
		\end{proof}

		Let $x, y \in A'$, where $x \in X', y \in Y'$ if $A$ is \nearb{\beta}.
		Out task is to show that $H$ contains a Hamilton path with ends $x$ and
		$y$. First, we consider the case where $A$ is \farb{\gamma}. Form a digraph $D$ by replacing each edge $uv$ of $G$ by the two arcs $uv$ and $vu$. It follows from \Cref{claim:robust-nbd} that $D$ is a robust $(\xi, \xi^{1/7})$-out-expander. \Cref{cor:robust-expanders-hamilton} implies the existence of a Hamilton path with ends $x,y$, which corresponds to a Hamilton path in $G$ with the same ends.

		Now, suppose that $A$ is \nearb{\beta}. We claim that $H$ has a perfect matching. To this end, let $S \subset X'$; we show that $|N_H(S)| \ge |S|$. 
		Since $\delta(G[A]) \ge \delta n$, we have $\delta(G[X, Y]) \ge (\delta / 2) n$, because $X, Y$ were chosen to maximise the number of \crossedges{X}{Y} edges. It follows that $\delta(H) \ge (\delta/2 - \xi)n \ge (\delta / 3)n$.
		Thus, if $|S| \le (\delta / 3)n$, then, trivially, $|N_H(S)| \ge |S|$. Similarly, if $|S| > |X'| - (\delta / 3) n$, then every vertex in $Y'$ has a neighbour in $S$, and the desired inequality again follows. The remaining case is when $(\delta / 3) n \le |S| \le |X'| - (\delta / 3) n$, where the inequality $|N_H(S)| \ge |S|$ follows from \Cref{claim:robust-nbd}. 

		Let $\{a_1 b_1, \ldots, a_t b_t\}$ be a perfect matching in $H$, where $t = |X'|$ and $a_i \in X', b_i \in Y'$ for $i \in [t]$. We assume for convenience that $a_i b_i$ is not the edge $xy$ (if the latter exists) for $i \in [t]$ -- this is possible as the removal of the edge $xy$ from $H$ does not change the arguments above. Without loss of generality, $a_1 = x$ and $b_t = y$.
		Form a directed graph $D$ with vertex set $\{v_1, \ldots, v_t\}$ where $v_i v_j$ is an arc whenever $b_i a_j$ is an edge of $H$. It follows from \Cref{claim:robust-nbd} that $D$ is a robust $(2\xi, 2\xi^{1/7})$-out-expander, thus by \Cref{cor:robust-expanders-hamilton} there is a Hamilton path in $D$ with ends $v_1, v_t$. Without loss of generality, this path is $(v_1 \ldots v_t)$. This path corresponds to the Hamilton path $(x = a_1 b_1 \ldots a_t b_t = y)$ in $H$. 
	\end{proof}
\section{Balancing the bipartite clusters} \label{sec:balancing}

	In this section we prove \Cref{lem:good-linear-forest}.
	The proof spans the whole section and consists of several claims.

	\subsection{The setup}

		We first recap the setup needed for the proof of \Cref{lem:good-linear-forest}. 
		We are given parameters $\cmin, n, \eta, \beta, \xi, \gamma, \zeta, \delta$ such that
		\[
			1/n \ll \eta \ll \beta \ll \xi \ll \gamma \ll \zeta \ll \delta \ll \cmin.
		\]
		We are also given a $d$-regular graph $G$, where $d \ge \cmin n$, and we denote $d = cn$, so that $c \ge \cmin$. 
		We are further given a partition $\{A_1, \ldots, A_r\}$ of $V(G)$, where $r \le \ceil{1/\cmin}$, that satisfies the following properties. 
		\begin{enumerate}
			\item
				$G$ has at most $\eta n^2$ edges with ends in separate clusters;
			\item
				for each $i \in [r]$, the minimum degree of $G[A_i]$ is at least
				$\delta n$;
			\item
				for each $i \in [r]$, $A_i$ has no $\zeta$-sparse cuts;
			\item
				for each $i \in [r]$, either $A_i$ is \nearb{\beta}, in which case we fix $\{X_i, Y_i\}$ to be a partition of $A_i$ that maximises the number of \crossedges{X_i}{Y_i} edges, or $A_i$ is \farb{\gamma}.
		\end{enumerate}

		For the sake of the proof of \Cref{lem:balancing-matching}, $\xi$ denotes any parameter satisfying $\beta \ll \xi \ll \gamma$; we will not use the fact that each $A_i$ is $\xi$-Hamiltonian if it is \farb{\gamma}, or $\xi$-weakly-Hamiltonian if it is \nearb{\beta}, which follows from \Cref{lem:rob-ham}.

		Our aim is to find a linear forest $H$ in $G$, with the following properties.
		\begin{enumerate}
			\item \label{it:H-small}
				$|H| \le \xi n$;
			\item \label{it:H-no-isolated}
				$H$ has no isolated vertices;
			\item \label{it:H-two-ends}
				for each $i \in [r]$, $A_i$ contains either two or zero leaves of $H$;
			\item \label{it:H-ends-equally}
				for each $i \in [r]$ such that $A_i$ is \nearb{\beta}, either
				$A_i$ contains no leaves of $H$, or $X_i$ and $Y_i$ each contain
				exactly one leaf of $H$;
			\item \label{it:H-balances}
				for each $i \in [r]$ such that $A_i$ is \nearb{\beta}, $|X_i \sm
				V(H)| = |Y_i \sm V(H)|$.
		\end{enumerate}

		In the proof, we shall consider the \emph{lift}
		of $G$, denoted $\liftG$, which is a bipartite analogue of $G$.  The lift
		$\liftG$ is defined as follows. We set $V(\liftG) = \partb{V} \cup
		\partt{V}$ where $\partb{V}, \partt{V}$ are disjoint copies of $V(G)$; for
		every $i \in \{1,2\}$ and $v \in V(G)$ we denote by $v^{(i)}$ the copy of
		$v$ in $V^{(i)}$. For all $u,v \in V(G)$, $\partb{u}\partt{v}$ is an edge of
		$\liftG$ if and only if $uv$ is an edge of $G$. There are no edges in
		$\liftG$ with both ends in $\partb{V}$ or in $\partt{V}$. It is clear from
		this construction that $\liftG$ is a $cn$-regular bipartite graph on $2n$
		vertices. 
		
		The use of the lift $\liftG$ of $G$ is convenient for us for three reasons. First, we shall be dealing with flows and matchings, and the fact that $\liftG$ is bipartite makes it easier to analyse them. Second, the lift allows us to treat \nearb{\beta} and \farb{\gamma} clusters in a unified way. And, third, consider a \nearb{\beta} cluster $A_i$, with prescribed partition $\{X_i, Y_i\}$, and suppose that $|Y_i| = |X_i| + k$. It turns out that in order to `balance' $A_i$, it suffices to find two matchings $M_1, M_2$, whose union does not span any cycles or double edges, and, for $j \in \{1, 2\}$, we have $|V(M_j) \cap Y| = |V(M_j) \cap X|$. Because $\liftG$ contains two copies of each such cluster, a so-called balancing matching for $\liftG$ (we make the notion precise below), pulled back to $G$, provides us with such `overbalancing' automatically. In particular, if $G$ is bipartite, then $\liftG$ consists of two copies of $G$, and its analysis allows us to find two such matchings simultaneously.

		We partition the vertices of $\liftG$ into sets $\liftA_1, \dotsc,
		\liftA_s$, which we call \emph{clumps} (which are related to, but should not
		to be confused with clusters $A_1, \dotsc, A_r$), as follows.  Let
		$i$ be the index of an arbitrary \nearb{\beta} cluster $A_i$ of $G$ and fix
		a partition $\{X_i, Y_i\}$ of $A_i$ which maximises the number of
		\crossedges{X_i}{Y_i} edges in $G$.  In particular, $X_i, Y_i \neq \es$ and
		all but at most $\beta n^2$ edges of $G[A_i]$ are between $X_i$ and $Y_i$.
		Furthermore, every vertex of $X_i$ (resp.\  $Y_i$) has at least $\delta n/2$
		neighbours in $Y_i$ (resp.\ $X_i$), as otherwise we could move that vertex to
		the other part, increasing the number of \crossedges{X_i}{Y_i} edges. For $j
		\in \{1,2\}$, let $X_i^{(j)}, Y_i^{(j)}$ be the copies of, respectively,
		$X_i, Y_i$ in $V^{(j)}$. We define sets
		\begin{align*}
			\liftA_{i,1} &= B_{i,1} \cup T_{i,1},
			\text{ where } B_{i,1} = \partb{X_i} \text{ and } T_{i,1} = \partt{Y_i}, \\
			\liftA_{i,2} &= B_{i,2} \cup T_{i,2},
			\text{ where } B_{i,2} = \partb{Y_i} \text{ and } T_{i,2} = \partt{X_i}.
		\end{align*}
		Now, let $i$ be the index of some \farb{\gamma} cluster $A_i$. We
		define $B_i$ and $T_i$ to be the copies of $A_i$ in $\partb{V}$ and
		$\partt{V}$, respectively, and
		\begin{equation*}
			\liftA_i = B_i \cup T_i.
		\end{equation*}
		In these definitions $B$ stands for the `bottom part' and $T$ stands for the
		`top part'.

		By doing this for all $i \in [r]$ we obtain a partition of $V(\liftG)$ into
		clumps labelled $\liftA_{i,1}, \liftA_{i,2}$ (for those $i$ for which $A_i$
		is \nearb{\beta}) and $\liftA_i$ (for the other $i$). To make the notation
		consistent, we relabel these clumps simply as $\liftA_1, \dotsc, \liftA_s$,
		where $s = r + |\{ i \in [r] :A_i \text{ is \nearb{\beta}} \}|$.  In
		particular, $s \in \{r, \dotsc, 2r\}$. We relabel the sets $B_{\dotsc}$ and
		$T_{\dotsc}$ appropriately, so that $\liftA_j = B_j \cup T_j$ for all $j \in
		[s]$.

		\begin{obs} \label{obs:few-crossedges}
			$\liftG$ has at most $3 r \beta n^2$ edges with ends in separate clumps.
		\end{obs}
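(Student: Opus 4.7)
The plan is to bound the cross-clump edges of $\liftG$ by tracing back to the edges of $G$ that produce them, and splitting into two cases depending on whether the underlying $G$-edge lies within a single cluster or across two clusters.

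First I will observe that every edge of $\liftG$ has the form $\partb{u}\partt{v}$ for some edge $uv$ of $G$, so each edge $uv \in E(G)$ lifts to exactly two edges of $\liftG$: $\partb{u}\partt{v}$ and $\partb{v}\partt{u}$. I then classify a lifted edge $\partb{u}\partt{v}$ as cross-clump or not by looking at where $u$ and $v$ sit in the cluster/partition data.

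Case 1 (the $G$-edge crosses clusters): if $u \in A_i$ and $v \in A_j$ with $i \ne j$, then regardless of how $A_i, A_j$ are treated (\farb{\gamma} or \nearb{\beta}), $\partb{u}$ and $\partt{v}$ lie in clumps built from $A_i$ and $A_j$ respectively, which are disjoint. By property~(a) of \Cref{lem:good-partition} there are at most $\eta n^2$ such $G$-edges, contributing at most $2\eta n^2$ cross-clump edges in $\liftG$.

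Case 2 (the $G$-edge lies inside a cluster $A_i$): if $A_i$ is \farb{\gamma}, then $\partb{u},\partt{v} \in \liftA_i$ and the lifted edge is not cross-clump. If $A_i$ is \nearb{\beta} with partition $\{X_i,Y_i\}$, then inspection of the four sub-cases gives cross-clump edges precisely when $u$ and $v$ lie in the same part of $\{X_i, Y_i\}$, i.e.\ when $uv$ is \emph{not} a \crossedges{X_i}{Y_i} edge. By the choice of $\{X_i,Y_i\}$ as a maximiser, there are at most $\beta n^2$ such edges in $G[A_i]$, lifting to at most $2\beta n^2$ cross-clump edges in $\liftG$. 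Summing over the at most $r$ \nearb{\beta} clusters yields at most $2r\beta n^2$ cross-clump edges from this case.

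Adding the two contributions gives at most $2\eta n^2 + 2r\beta n^2$ cross-clump edges in $\liftG$, and since $\eta \ll \beta$ (so in particular $2\eta \le r\beta$), this is at most $3 r \beta n^2$, as required. There is no genuine obstacle here; the only small point to get right is the four-case check in Case 2 showing that the cross-clump lifted edges correspond exactly to the $G$-edges inside $X_i$ or inside $Y_i$, which follows directly from how $\liftA_{i,1}$ and $\liftA_{i,2}$ were defined.
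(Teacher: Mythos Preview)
Your proof is correct and follows essentially the same approach as the paper's own proof: both classify lifted edges by whether the underlying $G$-edge crosses clusters or lies within an \nearb{\beta} cluster but fails to be an \crossedges{X_i}{Y_i} edge, arrive at the bound $2\eta n^2 + 2r\beta n^2$, and use $\eta \ll \beta$ to conclude.
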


		\begin{proof}
			First, note that every edge with both ends in a \farb{\gamma} cluster
			$A_i$ of $G$ gives rise to two edges of $\liftG$, both contained in the
			clump corresponding to $A_i$.  Now, consider an arbitrary \nearb{\beta}
			cluster $A_j$ of $G$. We recall that $A_j$ is partitioned into sets
			$X_j, Y_j$ such that all but at most $\beta n^2$ edges of $G[A_j]$ are
			\crossedges{X_j}{Y_j} edges. In $\liftG$, $A_j$ gives rise to two
			clumps, say, $\liftA_{j_1}$ and $\liftA_{j_2}$. If $e \in E(G[A_j])$ is
			an \crossedges{X_j}{Y_j} edge, then $e$ corresponds to two edges of
			$\liftG$, one in $\liftA_{j_1}$ and one in $\liftA_{j_2}$.  Therefore,
			only those edges of $G[A_j]$ that are not \crossedges{X_j}{Y_j} edges
			give rise to edges of $\liftG$ with ends in separate clumps. Also, we
			have to account for the edges of $G$ that have ends in separate
			clusters. Thus, the number of edges of $\liftG$ with ends in separate clumps is at most $2\eta n^2 + 2r\beta n^2 \le \,3 r \beta n^2$,
			using the assumption that $\eta
			\ll \beta$.
		\end{proof}

		\begin{obs} \label{obs:cluster-min-degree}
			For each $i \in [s]$, the minimum degree of $\liftG[\liftA_i]$ is at
			least $\delta n / 2$. In particular, every vertex in $\liftA_i$ has at
			most $(c - \delta/2)n$ neighbours in $V(\liftG) \sm \liftA_i$.
		\end{obs}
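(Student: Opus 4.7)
The plan is to handle the two types of clumps separately, reflecting how they were defined from the clusters of $G$, and then read off the ``in particular'' statement from the $cn$-regularity of $\liftG$.

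First I would consider a clump $\liftA_i$ arising from a \farb{\gamma} cluster $A_j$, so $\liftA_i = \partb{A_j} \cup \partt{A_j}$. From the construction of the lift, each edge $uv \in E(G[A_j])$ contributes exactly two edges to $\liftG[\liftA_i]$, namely $\partb{u}\partt{v}$ and $\partb{v}\partt{u}$, and these are the only edges of $\liftG[\liftA_i]$. Hence the degree of $\partb{v}$ (resp.\ $\partt{v}$) in $\liftG[\liftA_i]$ equals $|N_G(v) \cap A_j|$, which is at least $\delta n$ by property~(b) of \Cref{lem:good-partition}. This is already stronger than the desired bound $\delta n / 2$.

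Next I would handle the case where $\liftA_i$ comes from a \nearb{\beta} cluster $A_j$; say $\liftA_i = \liftA_{j,1} = \partb{X_j} \cup \partt{Y_j}$ (the other case is symmetric). Now the edges of $\liftG[\liftA_i]$ correspond precisely to the \crossedges{X_j}{Y_j} edges of $G[A_j]$. The key observation, which was already made when the partition $\{X_j, Y_j\}$ was chosen to maximise $e(G[X_j, Y_j])$, is that every vertex of $X_j$ has at least $\delta n / 2$ neighbours in $Y_j$, and vice versa: otherwise moving such a vertex across would strictly increase the number of cross edges, contradicting maximality (and using the minimum degree $\delta n$ in $G[A_j]$). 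Translating to $\liftG$, this says that every vertex of $\partb{X_j}$ and every vertex of $\partt{Y_j}$ has at least $\delta n / 2$ neighbours in $\liftA_i$, which is exactly the claim.

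The ``in particular'' statement is then immediate: since $\liftG$ is $cn$-regular, any vertex with at least $\delta n / 2$ neighbours inside $\liftA_i$ has at most $(c - \delta/2)n$ neighbours outside. There is really no obstacle here; the statement is a bookkeeping consequence of the partition's definition, and the only substantive content is the swapping argument for \nearb{\beta} clusters, which was essentially already noted in the setup.
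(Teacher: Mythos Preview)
Your proposal is correct and follows essentially the same approach as the paper: both split into the \farbroot{} and \nearbroot{} cases, use the minimum degree $\delta n$ in $G[A_j]$ for the former, invoke the already-noted $\delta n/2$ cross-degree bound from the maximality of $\{X_j,Y_j\}$ for the latter, and then deduce the ``in particular'' from $cn$-regularity of $\liftG$.
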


		\begin{proof}
			Pick $i$ and let $A_j$ be the cluster of $G$ that gives rise to
			$\liftA_i$.  Let $v^{(t)}$ be an arbitrary vertex in $\liftA_i$, where
			$v \in A_j$, $t \in \{1,2\}$. If $A_j$ is \farb{\gamma}, then $v$ has at
			least $\delta n$ neighbours in $A_j$, and every such neighbour $u$ gives
			rise to the vertex $u^{(3-t)} \in \liftA_i$, which is adjacent to
			$v^{(t)}$.

			So suppose that $A_j$ is \nearb{\beta} with partition $A_j = X_j \cup
			Y_j$.  We recall that this partition was chosen so that every vertex in
			$X_j$ has at least $\delta n / 2$ neighbours in $Y_j$ and vice versa.
			Therefore, the number of
			\crossedges{X_j}{Y_j} edges incident with $v$ is at most $\delta n / 2$, and, for every such edge $uv$, the vertex
			$u^{(3-t)}$ is a neighbour of $v^{(t)}$ in $\liftG[\liftA_i]$.
			
			This proves the first part of the observation. Together with the fact
			that $\liftG$ is $cn$-regular, it implies the second part as well.
		\end{proof}

		Let $H$ be a bipartite graph with bipartition $\{X, Y\}$ and let $U \subset
		V(H)$. We define
		\begin{equation*}
			\disb_H(U) = \big| |U \cap X| - |U \cap Y| \big|.
		\end{equation*}
		We call this quantity the \emph{imbalance} of $U$ in $H$. If $H$ is clear
		from the context, then we may write $\disb(U)$ instead of $\disb_H(U)$.
		Furthermore, we say that a subgraph $F \subset H$ \emph{balances} $U$ if
		$|(U \cap X) \sm V(F)| = |(U \cap Y) \sm V(F)|$.

		To make sure that imbalance is well-defined, we adopt the convention that
		every bipartite graph comes with a prescribed vertex bipartition. This
		choice will usually be clear from the context. For example, $\liftG$ has
		bipartition $\{\partb{V}, \partt{V}\}$ and so does every relevant spanning
		subgraph of $\liftG$.

		Now comes a key definition.  Let $\sigma$ be an ordering of $V(G)$. We define the
		spanning subgraph $\Gsigma$ of $\liftG$ by setting
		\begin{align*}
			E(\Gsigma) =
				\left\{
					\partb{u} \partt{v} \,:\;
					uv \in E(G), \; \sigma(u) < \sigma(v) \text{ and }
					\partb{u},\partt{v} \text{ are in distinct clumps of } \liftG
			\right \}.
		\end{align*}
		The rest of the proof goes as follows. First, we show that there exists an ordering $\sigma$ of $V(G)$ such that $G_\sigma$ contains a so-called balancing matching (see
		\Cref{lem:balancing-matching}). The reason we consider $G_\sigma$ instead of
		working directly with $\liftG$ is that a matching in $G_\sigma$ of size $m$
		corresponds to a linear forest in $G$ of size $m$, whereas the edges of $G$
		corresponding to a matching in $\liftG$ may span a cycle; moreover, an edge
		$uv$ in $G$ may be represented twice in a matching in $\liftG$ -- once as
		$u^{(1)}v^{(2)}$ and once as $u^{(2)}v^{(1)}$. We explain this more
		precisely towards the end of the section. Second, we take the linear forest
		in $G$ that comes from a balancing matching in $G_\sigma$, and we modify it
		slightly so that it satisfies the assertions of
		\Cref{lem:good-linear-forest}.
	\subsection{Balancing $G_\sigma$}

		Here comes the main technical lemma of the section.

		\begin{lem} \label{lem:balancing-matching}
			There is an ordering $\sigma$ such that $\Gsigma$ has a matching $M$ with the following properties:
			\begin{enumerate} \label{lem:balance-Gsigma}
				\item \label{it:M-balances}
					for each $i \in [s]$, $M$ balances $\liftA_i$;
				\item \label{it:M-small}
					$|M| \le (\xi\zeta/8) n$.
			\end{enumerate}
		\end{lem}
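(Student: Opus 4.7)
The plan is to recast the matching-balancing problem as an integer flow feasibility question on an auxiliary multidigraph, bound the imbalances using the regularity of $G$, and verify the resulting Hall-type cut condition holds with positive probability over $\sigma$.

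\textbf{Reformulation.} Let $D_\sigma$ be the multidigraph on vertex set $[s]$ that includes, for each edge $\partb{u}\partt{v} \in E(\Gsigma)$ with $\partb{u} \in B_i$ and $\partt{v} \in T_j$ (necessarily $i \neq j$), an arc from $i$ to $j$. Write $\tau_i = |B_i| - |T_i|$ for the signed imbalance of $\liftA_i$; note $\sum_i \tau_i = 0$. A matching $M \subseteq E(\Gsigma)$ balances every clump if and only if the corresponding arc multiset in $D_\sigma$ is a $0/1$-valued flow with divergence $\tau_i$ at each $i$. Any such arc set of size $o(n)$ can be realised as a matching in $\Gsigma$ by selecting distinct $B$- and $T$-endpoints (leveraging the min-degree from \Cref{obs:cluster-min-degree}), so finding $M$ reduces to finding such a flow.

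\textbf{Imbalance bound.} For each \nearb{\beta} cluster $A_i$ with maximising partition $\{X_i, Y_i\}$, let $a_{X_i}, a_{Y_i}$ be the numbers of intra-part edges of $G$. The $cn$-regularity of $G$ yields $cn|X_i| = 2a_{X_i} + e(X_i, Y_i) + e_G(X_i, V(G) \sm A_i)$ and the analogous identity for $Y_i$; subtracting gives
\[
    cn \cdot \big| |X_i| - |Y_i| \big| \,\le\, 2(a_{X_i} + a_{Y_i}) + e_G(A_i, V(G) \sm A_i) \,\le\, (2\beta + \eta)n^2,
\]
so $|\tau_i| = O(\beta n / \cmin)$ for each clump arising from a \nearb{\beta} cluster, while $\tau_i = 0$ for \farb{\gamma} clumps. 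Therefore $\sum_i |\tau_i| = O(\beta n)$, well below $(\xi\zeta/8)n$.

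\textbf{Hall condition and conclusion.} By the standard feasibility theorem for integer flows with prescribed divergences, the desired $0/1$ flow on $D_\sigma$ exists iff $\sum_{i \in U} \tau_i \le e_{D_\sigma}(U, [s] \sm U)$ for every $U \subseteq [s]$. Cuts $U$ containing both clumps of each \nearb{\beta} cluster have $\sum_{i \in U}\tau_i = 0$ (since $\tau_{i,1} + \tau_{i,2} = 0$), so are vacuous. The remaining cuts split some pair $\{\liftA_{i,1}, \liftA_{i,2}\}$; for these, the $a_{X_i}$ intra-$X_i$ edges of $G$ lift \emph{deterministically} to arcs from $\liftA_{i,1}$ to $\liftA_{i,2}$ and the $a_{Y_i}$ intra-$Y_i$ edges to the reverse direction, yielding $|a_{X_i} - a_{Y_i}| \ge (cn|\tau_{i,1}| - \eta n^2)/2$ arcs in the direction of $\tau_{i,1}$ by the same identity above. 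This dominates the $O(\beta n)$ demand by a factor of $\Omega(n)$ when $|\tau_{i,1}| \gg \eta n$; for smaller $|\tau_{i,1}|$ the remaining slack is easily covered by the $\Theta(\eta n^2)$ cross-cluster edges of $G$, whose $D_\sigma$-orientations are determined by $\sigma$ and concentrate by Chernoff. A union bound over the $\le 2^s$ choices of $U$ (with $s \le 2\ceil{1/\cmin}$, a constant depending only on $\cmin$) then shows the Hall condition holds with positive probability. The resulting matching satisfies $|M| \le \sum_i |\tau_i| = O(\beta n) \ll (\xi\zeta/8)n$.

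\textbf{Main obstacle.} The primary technical challenge is the sign-and-direction accounting in the third step: one must check that the \emph{deterministic} intra-cluster contribution to $e_{D_\sigma}(U, \bar U)$ flows in the direction that compensates (rather than aggravates) the imbalance $\tau_{i,1}$, which is guaranteed by the same degree identity that bounds $|\tau_i|$. A subsidiary bookkeeping issue arises when $U$ mixes clumps from several \nearb{\beta} clusters and \farb{\gamma} clusters; the $\Omega(n)$ slack between arc counts and imbalances should make this manageable without further structural input.
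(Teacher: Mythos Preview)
Your reformulation contains a genuine gap that undermines the whole approach. A $0/1$ flow on the clump-level multidigraph $D_\sigma$ is just a subset $S \subseteq E(\Gsigma)$ with the prescribed divergences; there is no reason for $S$ to be a matching. Your claim that ``any such arc set of size $o(n)$ can be realised as a matching in $\Gsigma$ by selecting distinct $B$- and $T$-endpoints (leveraging the min-degree from \Cref{obs:cluster-min-degree})'' is where the argument breaks. \Cref{obs:cluster-min-degree} speaks only about edges of $\liftG$ \emph{within} a clump; it says nothing about the distribution of the cross-clump edges of $\Gsigma$, and those are the only edges available for $M$. Concretely, it is entirely possible that $e_{\Gsigma}(B_i,T_j)$ is large while all of these edges are incident to a handful of vertices on one side, in which case no matching of the required size exists between $B_i$ and $T_j$. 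So the Gale--Hoffman condition on $D_\sigma$ is \emph{necessary} for a balancing matching but not \emph{sufficient}, and your reduction is one-directional. This is not a technicality: encoding the matching constraint is precisely the difficulty, and the paper handles it by putting unit capacities on the individual vertices of $V(\liftG)$ in its flow network $\Gsigmadi$, so that any feasible flow is automatically a fractional matching, and then rounding.

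Two secondary issues compound the gap. First, your Chernoff appeal is not immediate: the events $\{\sigma(u)<\sigma(v)\}$ for different cross-cluster edges are dependent whenever edges share an endpoint, so concentration requires extracting an independent family first (the paper does this via a matching of ``unrelated'' poor edges). Second, the Hall condition for cuts $U$ that mix clumps from several clusters is not verified; the deterministic sign argument you give applies cleanly only to singletons $U=\{i_1\}$, and the asserted ``$\Omega(n)$ slack'' does not obviously survive summation over a general $U$, because the cross-cluster edge counts between arbitrary unions of clumps can be as small as zero. The paper's proof sidesteps this by bounding the minimum vertex cover of $F_{I,J,\sigma}$ for every pair $I,J\subseteq[s]$ simultaneously via a rich/poor vertex dichotomy and a union bound over the $4^s$ choices.
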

		Property~\ref{it:M-balances} is the main part of this lemma: if we find a
		matching in $\Gsigma$ that balances $\liftA_1, \dotsc, \liftA_s$, then we
		get property~\ref{it:M-small} for free from the following argument.

		\begin{prop} \label{prop:small-submatching}
			Let $H$ be a balanced bipartite graph whose vertex set is partitioned into sets
			$U_1, \dotsc, U_k$. Suppose that $M$ is a matching in $H$ that balances
			$U_i$ for every $i \in [k]$. Then $M$ contains a matching that has at
			most $(k-1)(\disb(U_1) + \dotsb + \disb(U_k))$ edges and balances $U_i$
			for every $i \in [k]$.
		\end{prop}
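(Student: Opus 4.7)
The plan is to translate the statement into a signed-degree problem on a small auxiliary directed multigraph, and then apply a path decomposition. Let $\{X,Y\}$ be the bipartition of $H$ and build a directed multigraph $K$ on vertex set $[k]$ by replacing every edge $xy\in M$ (with $x\in X$, $y\in Y$) by an arc from $i$ to $j$, where $x\in U_i$ and $y\in U_j$ (a loop if $i=j$). Writing $\sigma(\ell)=\mathrm{out}_K(\ell)-\mathrm{in}_K(\ell)$, the hypothesis that $M$ balances $U_\ell$ rewrites as $\sigma(\ell)=|U_\ell\cap X|-|U_\ell\cap Y|$, and in particular $|\sigma(\ell)|=\disb(U_\ell)$. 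A submatching $M'\subseteq M$ corresponds to an arc-subgraph $K'\subseteq K$, and it still balances every $U_\ell$ iff $K'$ has the same signed-degree vector $(\sigma(\ell))_{\ell\in[k]}$.

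The first step is to prune $K$ while preserving all signed degrees. Self-loops and directed cycles contribute $0$ to the signed degree of every vertex they meet, so one can iteratively delete every self-loop and then peel off edge-disjoint directed cycles until no directed cycle remains. The resulting digraph $K'$ is acyclic, inherits $(\sigma(\ell))_\ell$, and therefore corresponds to a submatching $M'\subseteq M$ which still balances every $U_\ell$.

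It remains to bound $|E(K')|$, which is the main step. Because $K'$ is a DAG, any directed trail is automatically a simple path (revisiting a vertex would produce a directed cycle), and so has at most $k-1$ arcs. I would then use the following standard fact: the arc-set of a finite acyclic directed multigraph decomposes into exactly $\sum_\ell \max(\sigma(\ell),0)$ edge-disjoint directed paths. This can be verified in one line by induction on $|E(K')|$: pick any $v_0$ with $\sigma(v_0)>0$, greedily follow outgoing arcs (which must terminate at some sink $v_p$, forcing $\sigma(v_p)<0$), peel off this simple path, and observe that $\sum_\ell \max(\sigma(\ell),0)$ drops by exactly one. Since $\sum_\ell \sigma(\ell)=0$ in any multidigraph (each arc contributes $+1$ to some out-degree and $+1$ to some in-degree), this path count equals $\tfrac12\sum_\ell |\sigma(\ell)|=\tfrac12\sum_\ell \disb(U_\ell)$. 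Combining,
$$
|M'| \;=\; |E(K')| \;\le\; (k-1)\cdot\tfrac12\sum_{\ell=1}^{k}\disb(U_\ell) \;\le\; \tfrac{k}{2}\sum_{\ell=1}^{k}\disb(U_\ell),
$$
which is the bound asserted by the proposition. The only place requiring more than routine bookkeeping is the path-decomposition identity for DAGs, and the inductive argument sketched above handles it cleanly.
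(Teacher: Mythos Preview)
Your proof is correct and takes a genuinely different route from the paper's. The paper argues by induction on $|H|$: it first reduces to the case where $M$ is perfect, then builds an auxiliary digraph $F$ on $V(H)$ whose arcs are the edges of $M$ (directed $X\to Y$) together with an arbitrary pairing $P_i$ inside each $U_i$ (directed $Y\to X$). If $F$ has a directed cycle, or a directed path that starts and ends in the same $X_i$ or $Y_i$, the paper removes the $M$-edges on it and inducts with unchanged imbalances; otherwise it traces a directed path between exposed vertices, which uses at most $k$ edges of $M$ and drops the total imbalance by $2$.

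Your argument instead contracts each $U_\ell$ to a single vertex, obtaining a directed multigraph $K$ on $[k]$ in which balancing becomes a signed-degree condition, then kills loops and directed cycles and decomposes the remaining DAG into $\tfrac12\sum_\ell|\sigma(\ell)|$ edge-disjoint simple paths of length at most $k-1$. This is cleaner: there is no perfect-matching reduction, no auxiliary pairing, and no case analysis, and you in fact obtain the slightly sharper bound $\tfrac{k-1}{2}\sum_\ell \disb(U_\ell)$ before relaxing to the stated one. The paper's approach, by contrast, stays in $H$ and makes the inductive mechanism more visible at the level of individual vertices, which matches how the surrounding section thinks about matchings in $\Gsigma$. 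Either way the key identity is the same: removing a directed cycle (or closed walk) preserves all signed degrees, while removing a source-to-sink path lowers the positive excess by exactly one.
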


		\begin{proof}
			We use induction on $k$. The base case is when $k = 1$, in which case $U_1 = V(H)$ and $\disb(U_1) = 0$ because $H$ is balanced, so the empty matching is a balancing matching.

			Next, suppose that $k \ge 2$. Denote the bipartition of $H$ by $\{X, Y\}$, and let $X_i = X \cap U_i$, $Y_i = Y \cap U_i$ for $i \in [k]$. Without loss of generality, we assume that $M$ is a minimal matching that balances $U_i$ for every $i \in [k]$.
			We claim that there is $i \in [k]$ for which $M$ does not touch $Y_i$. Indeed, let $D$ be an auxiliary directed graph on vertex set $[k]$, where $ij$ is an edge if there is an \crossedges{X_i}{Y_j} edge in $M$. 
			Suppose that $(i_1 \ldots i_s)$ is a directed cycle in $D$. Then there exist $a_j \in X_{i_j}, b_j \in Y_{i_j}$ such that $a_1 b_2, \ldots, a_s b_1$ are edges of $M$. But then $M \setminus \{a_1 b_2, \ldots, a_s b_1\}$ balances every $U_i$, contradicting the minimality of $M$. It follows that $D$ is acyclic, which implies the existence of $i \in [k]$ with in-degree $0$ in $D$, i.e.\ $M$ does not touch $Y_i$, as claimed. 

			Without loss of generality, suppose that $M$ does not touch $Y_k$. As $M$ balances $U_k$, the number of edges of $M$ that touch $X_k$ is exactly $|X_k| - |Y_k| = \disb(U_k)$. Let $M'$ be the submatching of $M$ obtained by removing the edges that touch $X_k$, let $H'$ be the subgraph of $H$ obtained by removing $U_k$ and vertices of $U_1 \cup \dots \cup U_{k-1}$ that are neighbours of $X_k$ in $M$, and let $U_i' = U_i \cap V(H')$.
			Then $H'$ is a balanced bipartite graph, as exactly $|X_k|$ vertices are removed from each part of $H$ to form $H'$. Moreover, $M'$ is a minimal matching in $H'$ that balances $U_i'$ for every $i \in [k-1]$. Thus, by induction, 
			\[
				|M'| \le (k-2) (\disb(U_1') + \dotsb + \disb(U_{k-1}')) \le (k-2)(\disb(U_1) + \ldots + \disb(U_k)),
			\]
			because the sum of imbalances of $U_1, \ldots, U_{l-1}$ increases by at most $\disb(U_k)$ when going from $H$ to $H'$. Since $|M \setminus M'| = \disb(U_k)$, we have $|M| \le (k-1)(\disb(U_1) + \dotsb + \disb(U_k))$, as required.
		\end{proof}

		\begin{obs} \label{obs:small-disb}
			$\sum_{i=1}^s \disb_{\liftG}(\liftA_i) \le (6 \beta r / c) n$.
		\end{obs}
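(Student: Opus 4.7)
The plan is straightforward: bound $\disb_{\liftG}(\liftA_i)$ for each clump individually and then sum. First I would observe that for every \farb{\gamma} cluster $A_i$, the corresponding clump $\liftA_i = \partb{A_i} \cup \partt{A_i}$ has equal-sized bottom and top parts, so $\disb_{\liftG}(\liftA_i) = 0$. Hence only the clumps arising from \nearb{\beta} clusters can contribute to the sum, and there are at most $2r$ of them.

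Fix a \nearb{\beta} cluster $A_j$ with its chosen partition $\{X_j, Y_j\}$, and let $\liftA_{j_1}, \liftA_{j_2}$ be the two clumps it produces. By construction one of them has bottom part of size $|X_j|$ and top part of size $|Y_j|$, while the other has these swapped, so each has imbalance $\bigl||X_j|-|Y_j|\bigr|$ and together they contribute $2\bigl||X_j|-|Y_j|\bigr|$. The heart of the argument is therefore to bound $\bigl||X_j|-|Y_j|\bigr|$, and this is where I would exploit the $cn$-regularity of $G$. Counting edges incident to $X_j$ and to $Y_j$ separately yields
\begin{align*}
    cn|X_j| &= 2e(G[X_j]) + e(G[X_j,Y_j]) + a_j,\\
    cn|Y_j| &= 2e(G[Y_j]) + e(G[X_j,Y_j]) + b_j,
\end{align*}
where $a_j$ and $b_j$ denote the numbers of edges of $G$ with one end in $X_j$, respectively in $Y_j$, and the other end in $V(G)\sm A_j$. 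Subtracting the two identities gives $cn(|X_j|-|Y_j|) = 2(e(G[X_j]) - e(G[Y_j])) + (a_j - b_j)$.

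Now I would invoke the two standing properties supplied by \Cref{lem:good-partition}: the \nearb{\beta} hypothesis (together with the fact that $\{X_j, Y_j\}$ maximises $e(G[X_j, Y_j])$) gives $e(G[X_j]) + e(G[Y_j]) \le \beta n^2$, while the inter-cluster edge bound gives $a_j + b_j \le \eta n^2$. Taking absolute values in the identity above yields $cn\bigl||X_j|-|Y_j|\bigr| \le 2\beta n^2 + \eta n^2 \le 3\beta n^2$, and hence $\bigl||X_j|-|Y_j|\bigr| \le 3\beta n/c$. Summing the contribution $2\bigl||X_j|-|Y_j|\bigr|$ over the at most $r$ \nearb{\beta} clusters $A_j$ gives $\sum_{i=1}^s \disb_{\liftG}(\liftA_i) \le r \cdot 2 \cdot 3 \beta n / c = 6 \beta r n / c$, as required.

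There is no real conceptual obstacle here; the bound is essentially a bookkeeping consequence of the regularity of $G$ and the definitions. The only mild point to double-check is that the \nearb{\beta} property really does yield $e(G[X_j]) + e(G[Y_j]) \le \beta n^2$ for the particular partition in use, which is immediate because $\{X_j, Y_j\}$ was chosen to be a cut-maximising partition witnessing almost-bipartiteness.
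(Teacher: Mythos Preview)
Your proof is correct, but it takes a somewhat different route from the paper's. The paper works entirely in the lift $\liftG$: for each clump $\liftA_i$ it uses the $cn$-regularity of $\liftG$ to bound $\disb_{\liftG}(\liftA_i) \le e_{\liftG}(\liftA_i, V(\liftG)\setminus \liftA_i)/(cn)$, and then sums over all $i$, invoking \Cref{obs:few-crossedges} (the $3r\beta n^2$ bound on cross-clump edges in $\liftG$) to get the result directly. This treats all clumps uniformly and never distinguishes between the \farb{\gamma} and \nearb{\beta} cases.

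Your approach instead descends to $G$, observes that \farb{\gamma} clusters contribute zero imbalance, and for each \nearb{\beta} cluster bounds $\bigl||X_j|-|Y_j|\bigr|$ via an explicit degree-sum identity in $G$. Both arguments ultimately rest on the same ingredients (regularity and the $\beta n^2$/$\eta n^2$ edge bounds), and both land exactly on $6\beta r n/c$. The paper's version is slicker and more uniform; yours is more concrete and makes it transparent that only the \nearb{\beta} clusters matter. Either is perfectly acceptable here.
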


		\begin{proof}
			Pick $i \in [s]$ and recall that $T_i, B_i$ are the vertex classes of
			$\liftA_i$. Since $\liftG$ is $cn$-regular, we have
			\begin{align*}
				|T_i|cn \;&
				= e(T_i, B_i) + e(T_i, V(\liftG) \sm \liftA_i) \le |B_i|cn \;+ e(T_i, V(\liftG) \sm \liftA_i)
			\end{align*}
			From this upper bound for $|T_i|cn$ and the corresponding upper bound
			for $|B_i|cn$ we get
			\begin{equation*}
				\disb_{\liftG}(\liftA_i) = \big||T_i| - |B_i|\big|\le
				\frac{e(\liftA_i, V(\liftG) \sm \liftA_i)}{cn}
			\end{equation*}
			Summing over all $i$ and applying \Cref{obs:few-crossedges} gives the
			desired result.
		\end{proof}

		\begin{proof}[ of \Cref{lem:balance-Gsigma}]
			As noted above, it is enough to find an ordering $\sigma$ and a
			matching $M \subset \Gsigma$ that satisfies property~%
			\ref{it:M-balances} in \Cref{lem:balancing-matching}. Indeed,
			\Cref{prop:small-submatching,obs:small-disb} then give us a
			submatching of $M$ that satisfies property~\ref{it:M-balances} and
			has at most $(12 \beta r^2 / c) n \le (\xi \zeta / 8) n$ edges, the
			latter bound being a consequence of the assumption that $\beta \ll
			\xi \ll \zeta$. We split our proof into two main steps. In the
			first step we find an ordering $\sigma$ for which there is an
			almost balancing fractional matching in $G_\sigma$. In the second
			step we convert it to a balancing matching in $G_\sigma$. 

			\step{1}{Using the Max-Flow Min-Cut theorem to obtain an almost balancing fractional matching in $\Gsigma$ for some ordering $\sigma$}

				The terms used in the summary of this step are mostly
				self-explanatory, but we define them formally to clarify the
				details. A \emph{fractional matching} in $\Gsigma$ is a function $w$
				that assigns weights from the interval $[0,1]$ to the edges of
				$\Gsigma$ in such a way that for each vertex $v \in V(\Gsigma)$ the
				\emph{weight} of $v$, denoted $w(v)$ and defined as $\sum_{uv \in
				E(\Gsigma)} w(uv)$, does not exceed $1$.
				Let $w$ be a fractional matching in $\Gsigma$. For any $U \subset
				V(\Gsigma)$ we define $w(U) = \sum_{v \in U} w(v)$. For each $i \in
				[s]$ we define $\disb(w,i) = \big| (|T_i| - w(T_i)) - (|B_i| - w(B_i))
				\big|$. We say that $w$ is \emph{\almostbal{\alpha}} if $\sum_{i=1}^s
				\disb(w,i) \le \alpha$. One can think of $w(T_i)$ as the weight of the edges leaving $T_i$ (recall that in $\Gsigma$ there are no \crossedges{T_i}{B_i} edges), and similarly for $B_i$. If $\disb(w, i) = 0$, this means that the fractional matching $w$ balances the cluster $U_i$. Since we are not able to find such a balancing fractional matching directly, we settle for one that is nearly-balancing, and the quantity $\disb(w, i)$ allows us to measure how far $w$ is from balancing $U_i$.
				In this step we will find a \almostbal{0.9} fractional matching in $\Gsigma$ for some $\sigma$.

				We now prepare $\Gsigma$ for an application of the Max-Flow Min-Cut
				theorem, that is, we convert it to a weighted digraph $\Gsigmadi$
				with a source and a sink (see \Cref{fig:flow}). The vertex set of $\Gsigmadi$ contains
				$V(\liftG)$ and $2s+2$ new vertices: source $p$, sink $q$ and, for
				each $i \in [s]$, a pair of new vertices $b_i,t_i$. The edges of
				$\Gsigma$ become arcs of $\Gsigmadi$, directed from $\partb{V}$ to
				$\partt{V}$ (we recall that $\partb{V} = B_1 \cup \dotsb \cup B_s$
				and $\partt{V} = T_1 \cup \dotsb \cup T_s$). For every $i \in [s]$
				we add arcs (1) from $p$ to $b_i$, (2) from $b_i$ to all vertices in
				$B_i$, (3) from all vertices in $T_i$ to $t_i$ and (4) from $t_i$ to
				$q$. Vertices that were present in $\Gsigma$ get capacity $1$, while
				$p,q$ get infinite capacity. The capacities of $b_i,t_i$, $i \in
				[s]$, are defined via quantities $a_{ij}$, $i,j \in [s]$, which we
				now introduce. We set
				\begin{equation*}
					a_{ij} =
					\begin{cases}
						\frac{1}{cn} \, e_{\liftG}(B_i, T_j) &\text{if } i \neq j \\
						0 &\text{otherwise}
					\end{cases}
				\end{equation*}
				and, for every $k \in [s]$,
				\begin{align*}
					b_k \text{ gets capacity } \sum_{j=1}^s a_{kj}, \qquad \qquad 
					t_k \text{ gets capacity } \sum_{i=1}^s a_{ik}.
				\end{align*}

				\begin{figure}[ht]
					\centering
					\includegraphics[scale = .9]{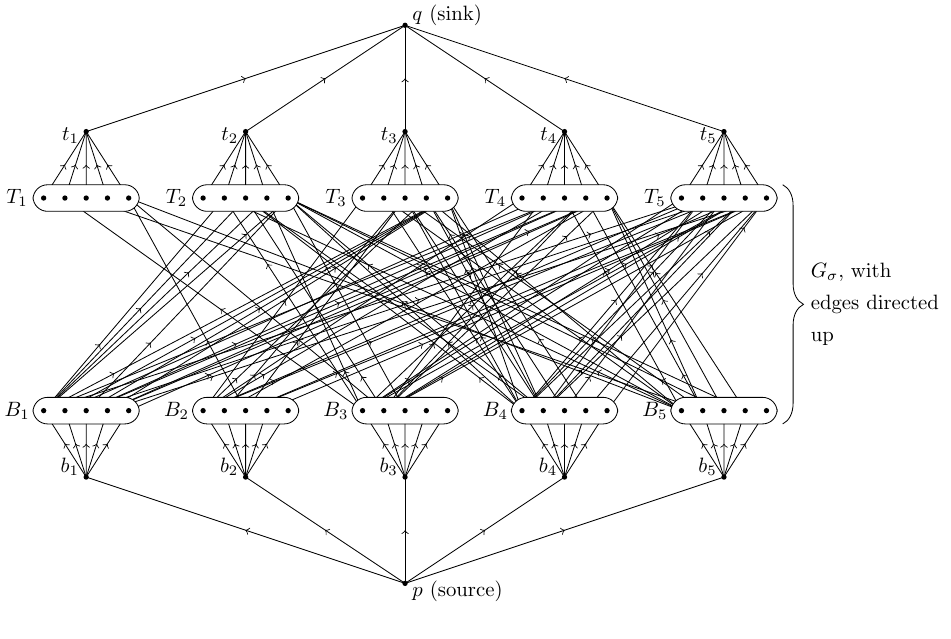}
					\vspace{-18pt}
					\caption{Definition of $\Gsigmadi$.}
					\label{fig:flow}
				\end{figure}

				A \emph{cut} of $\Gsigmadi$ is a subset of $V(\Gsigmadi) \sm
				\{p,q\}$ whose removal from $\Gsigmadi$ disconnects $q$ from $p$.
				
				We will show that, for some $\sigma$, $\Gsigmadi$ does
				not have cuts with capacity less than $\sum_i \sum_j a_{ij} -
				0.9$.  We will then apply the Max-Flow Min-Cut theorem to
				deduce the existence of a flow of at least this value, which in
				turn implies the existence of the required \almostbal{0.9}
				fractional matching. We note that the standard version of
				the Max-Flow Min-Cut theorem places capacities on the arcs than on the vertices and uses an appropriate
				notion of a cut. As the version that we use can
				be proved similarly to the standard one, we elect to use it out of 
				convenience.

				The choice of capacities of the vertices $b_i$ and $t_i$ may seem arbitrary at first glace\footnote{and, indeed, some trial and error was required in order to arrive at this `correct' choice of capacities,}, so before proceeding let us briefly explain why this choice makes sense. For each $i \in [s]$, the difference between the capacity of $b_i$ and the capacity of $t_i$, is the difference between the number of edges of $\liftG$ incident with $B_i$ and the number of edges incident with $T_i$, divided by $cn$, which is exactly the imbalance of the clump $\liftA_i$ in $\liftG$. It follows that a flow in $\Gsigma$ that fully saturates both $b_i$ and $t_i$ (namely, the amount of flow through each of these vertices equals their capacity) translates into a fractional matching in $\liftG$ that balances $\liftA_i$. Thus, a flow in $\Gsigma$ in which $b_i$ and $t_i$ are fully saturated for all $i \in [s]$, translates into the desired balancing fractional matching. Since the value of such a flow is $\sum_i \sum_j a_{ij}$, any flow with almost this value (a proof of whose existence in some $\Gsigma$ will be the main aim of this section) almost saturates the vertices $b_i$ and $t_i$ for each $i \in [s]$, and translates into the required almost-balancing fractional matching.

				With the goal of proving that some $\Gsigma$ has no cuts of low capacity in mind, we consider graphs $F_{I,J,\sigma}$, defined for all $I,J
				\subset [s]$, that are the induced subgraphs of $\Gsigma$ on vertices
				\begin{equation*}
					V(F_{I,J,\sigma}) = \left( \bigcup_{i \in I} B_i \right) \cup 
										\left( \bigcup_{j \in J} T_j \right).
				\end{equation*}
				The point of this definition is that every cut of $\Gsigmadi$
				induces a vertex cover of $F_{I,J,\sigma}$ for appropriately chosen
				$I,J$. This is why the following claim is useful.

				\begin{claim} \label{claim:matching-I-J}
					Fix $I,J \subset [s]$. Let $\sigma$ be a random ordering of
					$V(G)$, chosen uniformly at random.  With probability
					greater than $1-4^{-s}$, every vertex cover of
					$F_{I,J,\sigma}$ contains at least $\sum_{i \in I} \sum_{j
					\in J} a_{ij} - 0.9$ vertices.
				\end{claim}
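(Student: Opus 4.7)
The plan is to translate the claim into a maximum-matching statement via K\"onig's theorem and then bound the max matching of $F_{I,J,\sigma}$ from below, exploiting the structure of the random ordering $\sigma$ in conjunction with a fractional-matching construction in the lift.

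Since $F_{I,J,\sigma}$ is bipartite with sides $\bigcup_{i \in I} B_i$ and $\bigcup_{j \in J} T_j$, K\"onig's theorem identifies its minimum vertex cover with its maximum matching. So it suffices to prove that, with probability greater than $1 - 4^{-s}$, the graph $F_{I,J,\sigma}$ contains a matching of size at least $\sum_{i \in I, j \in J} a_{ij} - 0.9$.

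The starting observation is that the deterministic bipartite graph $\liftG[F_{I,J}]$ of cross-clump edges between $\bigcup_{i \in I} B_i$ and $\bigcup_{j \in J} T_j$ admits a fractional matching of value exactly $\sum_{i,j} a_{ij}$: assign weight $1/(cn)$ to every edge and use that every vertex of $\liftG$ has degree $cn$. I will then build a corresponding (possibly fractional) matching in the random subgraph $F_{I,J,\sigma}$ which exploits the fact that each $G$-edge $uv$ contributes two lifts $\partb{u}\partt{v}$ and $\partb{v}\partt{u}$ to $\liftG$, of which exactly one survives in $\Gsigma$; the surviving lift can absorb the weight from its dead partner, and in expectation this preserves the original fractional value.

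The main technical step is to show that the resulting random object is, after at worst a mild rescaling, a feasible fractional matching of value at least $\sum_{i,j} a_{ij} - 0.9$. Here I rely on the sharp fact that under a uniformly random permutation of $V(G)$ the relative orderings of disjoint pairs of vertices are exactly independent $\mathrm{Bernoulli}(1/2)$ random variables (a direct symmetry calculation on four elements), which gives Hoeffding-type concentration for the random weight at each vertex. A union bound over the bounded number $s \le 2\lceil 1/\cmin \rceil$ of clumps (together with the generous slack $0.9$) yields the desired $1 - 4^{-s}$ probability bound, and bipartite LP integrality promotes the fractional matching to an integer one of the required size.

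The main obstacle I foresee is arranging the loss from enforcing the vertex constraints to be only a constant $0.9$, rather than the $O(\sqrt{n})$-type loss that would come from naive martingale concentration for random permutations. I expect this to rely on a combination of the exact disjoint-pair independence (so that each vertex-weight random variable is a sum of genuinely independent Bernoullis rather than merely a Lipschitz function of $\sigma$) and the boundedness of $s$ (so only $O(1)$ clumps contribute excess weight); the looser bound $1 - 4^{-s}$ on the probability, rather than $1 - o(1)$, is precisely what is needed for the later union bound over the $4^s$ choices of $(I,J) \subset [s] \times [s]$.
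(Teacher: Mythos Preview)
Your plan has a genuine gap at the concentration step. You invoke the fact that the relative orderings of \emph{disjoint} pairs under a uniform permutation are independent, and then claim this yields Hoeffding-type concentration for the weight at each vertex. But the weight at a fixed vertex $v^{(1)}$, after your doubling, is $\tfrac{2}{cn}\,|\{u : v^{(1)}u^{(2)}\in E_{I,J},\ \sigma(v)<\sigma(u)\}|$, and the pairs $(v,u_1),(v,u_2),\dotsc$ all share $v$ --- they are not disjoint, so your independence fact does not apply. Indeed this count is $k'$ minus the rank of $v$ among $\{v,u_1,\dotsc,u_{k'}\}$, a \emph{uniform} random variable on $\{0,\dotsc,k'\}$: the weight therefore fluctuates by $\Theta(1)$, not $O(1/\sqrt{n})$. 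For a vertex with $k'>cn/2$ the expected excess over $1$ is $\Theta(1)$, and there can be as many as $\Theta(|E_{I,J}|/(cn)) = \Theta\bigl(\sum_{i,j}a_{ij}\bigr)$ such vertices, so trimming to a feasible fractional matching costs a constant \emph{fraction} of the target value, not the constant $0.9$. The boundedness of $s$ does not help, because the excess accumulates per vertex, not per clump. (There is also a secondary issue: the partner lift $v^{(1)}u^{(2)}$ of an edge $u^{(1)}v^{(2)}\in E_{I,J}$ need not itself lie in $E_{I,J}$, so ``absorbing the dead partner's weight'' is not always meaningful inside $F_{I,J,\sigma}$.)

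The paper sidesteps exactly this difficulty by splitting $E_{I,J}$ into \emph{poor} edges (both ends of degree $<cn/(1000s)$ in $E_{I,J}$) and \emph{rich} edges. For poor edges, low degree forces any vertex cover of $E_{\mathrm{poor}}$ to be large, so K\"onig already gives a matching of size $\ge 1000s\,|E_{\mathrm{poor}}|/(cn)$; one then greedily extracts a third of it in which no two edges $u^{(1)}v^{(2)}$ and $x^{(1)}y^{(2)}$ satisfy $\{u,v\}\cap\{x,y\}\neq\emptyset$, and \emph{now} the survival indicators are genuinely independent and Chernoff applies. For rich edges one only needs $\lceil |E_{\mathrm{rich}}|/(cn)\rceil$ rich vertices that retain moderate degree in $F_{I,J,\sigma}$, and a first-moment (Markov) bound on the number of ``ruined'' rich vertices suffices. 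This rich/poor dichotomy is the missing ingredient in your proposal.
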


				\begin{proof}
					We define 
					\begin{equation*}
						E_{I,J} = \left\{ \partb{u}\partt{v} : uv \in E(G) \text{ and }
						\partb{u} \in B_i, \partt{v} \in T_j \text{ with } i \in I, j \in J,
						i \neq j \right\}.
					\end{equation*}
					In other words, $E_{I,J}$ is the set of edges of
					$\liftG[V(F_{I,J,\sigma})]$ that have ends in separate clumps.
					Note that $|E_{I,J}| = cn \sum_{i \in I} \sum_{j \in J} a_{ij}$
					and that any given edge $\partb{u} \partt{v} \in E_{I,J}$ is in
					$F_{I,J,\sigma}$ if and only if $\sigma(u) < \sigma(v)$.
					Furthermore, it follows from \Cref{obs:cluster-min-degree} that
					any vertex in $V(F_{I,J,\sigma})$ is incident with at most $(c -
					\delta/2)n$ edges in $E_{I,J}$.

					We classify the vertices of $F_{I,J,\sigma}$ as rich or poor,
					according to the following rule (which does not depend on
					$\sigma$):
					\begin{equation*}
						v \in V(F_{I,J,\sigma}) \text{ is }
						\begin{cases}
							\text{\emph{rich}} \text{ if $v$ is incident with at
								least $c n / (1000 s)$ edges in $E_{I,J}$} \\
							\text{\emph{poor}} \text{ otherwise}.
						\end{cases}
					\end{equation*}
					We also say that $e \in E_{I,J}$ is \emph{rich} if at least one
					end of $e$ is rich and \emph{poor} otherwise. We write $\Erich$
					and $\Epoor$ to denote the sets of, respectively, rich and poor
					edges in $E_{I,J}$.

					Our strategy is as follows: first, with high probability, we
					construct a matching in $\Epoor \cap E(F_{I,J,\sigma})$ of size
					at least $|\Epoor| / (cn) - 0.9$; then, also with high
					probability, we construct a matching in $\Erich \cap
					E(F_{I,J,\sigma})$ of size at least $|\Erich| / (cn)$, ensuring
					that these two matchings are vertex-disjoint. If we are
					successful in both tasks, then the union of these matchings is a
					matching in $F_{I,J,\sigma}$ of size at least $|E_{I,J}|/(cn) -
					0.9$, giving the desired result.

					First, we deal with the poor edges. Since the smallest vertex
					cover of $\Epoor$ contains only poor vertices, the cardinality
					of such a cover is at least $1000s|\Epoor| / (cn)$. By
					K\H{o}nig's theorem $\Epoor$ contains a matching $M$ of size
					$|M| \ge 1000s|\Epoor| / (cn)$. We say that two distinct
					edges $e,f \in M$ are \emph{related} if there exists a vertex $u
					\in V(G)$ such that $\partb{u}$ is an end of $e$ and $\partt{u}$
					is an end of $f$, or vice versa. We greedily construct a subset
					$M' \subset M$ such that $|M'| \ge |M|/3$ and $M'$ does not
					contain any pairs of related edges: initially we set $M' = \es$
					and consider the edges in $M$ one by one, putting $e \in M$ into
					$M'$ if $e$ is not related to any edges already present in $M'$.
					The bound $|M'| \ge |M| / 3$ comes from the fact that any edge
					of $M$ is related to at most two other edges. Indeed, for every
					edge $e \in M \sm M'$ there exists an edge in $M'$ that
					prevented $e$ from being accepted into $M'$, while a single edge
					in $M'$ can prevent at most two edges from being accepted,
					giving $|M \sm M'| \le 2 |M'|$.

					Let $\Evpoor$ be the event that $|M' \cap E(F_{I,J,\sigma})| \ge
					|\Epoor| / (cn) - 0.9$.  A given edge $\partb{u} \partt{v} \in
					M'$ is in $E(F_{I,J,\sigma})$ if and only if $\sigma(u) <
					\sigma(v)$, which happens with probability $1/2$.  Moreover,
					since $M'$ does not contain related edges, the events of
					particular edges of $M'$ being present in $E(F_{I,J,\sigma})$
					are independent, because they are determined by restrictions of
					$\sigma$ to mutually disjoint pairs of vertices. As a result,
					$|M' \cap E(F_{I,J,\sigma})|$ has distribution
					$\text{Binom}(|M'|, 1/2)$. An application of a Chernoff's bound
					gives
					\begin{equation*} 
						\Pb{\big|M' \cap E(F_{I,J,\sigma})\big|
						< \frac{|M'|}{3}}
						\le \exp\left( -\frac{|M'|}{18} \right).  
					\end{equation*}

					Note that, in particular, $|M'|/3 \ge 1000s|\Epoor| / (9 c n)
					\ge |\Epoor| / (cn)$.  If $|\Epoor| \ge (162 / 1000) c n$, then
					we also have $|M'| \ge 54 s$, and hence $\Evpoor$ holds with
					probability at least $1 - \exp(-3s) > 1 - 4^{-s}/2$. On the
					other hand, if $|\Epoor| < (162 / 1000) c n$, then $|\Epoor| /
					(cn) < 0.9$, which means that $\Evpoor$ trivially holds. In
					either case,
					\begin{equation*}
						\mathbb{P}(\Evpoor) > 1 - \frac{4^{-s}}{2}.
					\end{equation*}

					We now  turn our focus to the rich edges. First, suppose that
					$\Erich \neq \es$. Since any vertex in $V(F_{I,J,\sigma})$ is
					incident with at most $(c - \delta/2) n$ edges in $E_{I,J}$,
					there are at least $|\Erich| / (cn - \delta n / 2)$ rich
					vertices. Let $\ell = \lceil |\Erich| / (cn - \delta n/2)
					\rceil$ and let $R$ be a set of $\ell$ rich vertices. We say
					that a vertex in $R$ is \emph{ruined} if its degree in
					$F_{I,J,\sigma}$ is smaller than $\delta\sqrt\beta n$. Consider an
					arbitrary vertex in $R$ that belongs to the vertex class
					$\partb{V}$, that is, a vertex of the form $\partb{v} \in R$
					with $v \in V(G)$.  Let $u_1, \dotsc, u_d$ be the vertices in
					$V(G)$ such that $\partt{u_1}, \dotsc, \partt{u_d}$ are adjacent
					to $\partb{v}$ via edges in $E_{I,J}$.  Since $\partb{v}$ is
					rich, $d \ge cn/(1000s)$. Note that $\partb{v}$ is ruined if and
					only if $v$ appears in one of the final $\lceil \delta\sqrt\beta
					n \rceil$ positions of the order that $\sigma$ induces on $\{ v,
					u_1, \dotsc, u_d \}$. Since $v$ is equally likely to be in any
					position of this order, we have
					\begin{equation*}
						\Pb{\partb{v} \text{ is ruined}} 
						\le \frac{\delta\sqrt\beta n + 1}{c n/(1000s) + 1} 
						\le \frac{2000s\delta\sqrt\beta}{c}
						< \frac{\delta}{4^{s+1} c},
					\end{equation*}
					where the latter inequality comes from the assumption that
					$\beta \ll \cmin$.  The same bound holds for those vertices in $R$
					that are in the vertex class $\partt{V}$.  Hence, the expected
					number of ruined vertices in $R$ is at most $4^{-s-1} \delta
					\ell / c$.  Markov's inequality gives
					\begin{equation*}
						\Pb{R \text{ has at least } \frac{\delta}{2c} \ell
						\text{ ruined vertices}}
						< \frac{4^{-s}}{2}.
					\end{equation*}

					Let $\Evrich$ be the event that at least $|\Erich| / (cn)$
					vertices in $R$ are not ruined. If $\Erich = \es$, then
					$\Evrich$ trivially holds. Otherwise, as we have just seen, with
					probability greater than $1 - 4^{-s}/2$, there are at least $(1 -
					\delta/(2c))\ell$ vertices in $R$ that are not ruined. Since $\ell \ge
					|\Erich| / ( cn - \delta n / 2 )$, we have $\mathbb{P}(\Evrich)
					> 1 - 4^{-s}/2$.
					
					At this point we have established that $\mathbb{P}(\Evpoor \cap
					\Evrich) > 1 - 4^{-s}$.  We will finish the proof of the claim
					by assuming that $\Evpoor, \Evrich$ both occur and constructing
					a matching in $F_{I,J,\sigma}$ of size at least $|\Epoor|/(cn) +
					|\Erich|/(cn) - 0.9$.  From $\Evpoor$ we get a matching $M_0
					\subset \Epoor \cap E(F_{I,J,\sigma})$ of size $|M_0| \ge
					|\Epoor| / (cn) - 0.9$.  Furthermore, since $\Evrich$ occurs,
					there exist $m = \lceil |\Erich| / (cn) \rceil$ distinct rich
					vertices $v_1, \dotsc, v_m \in V(F_{I,J,\sigma})$ of degree at
					least $\delta\sqrt\beta n$ in $F_{I,J,\sigma}$. Note that $v_1,
					\dotsc, v_m \not\in V(M_0)$ because the edges in $M_0$ are poor.

					We now construct an eventually terminating sequence of matchings
					$M_0 \subset M_1 \subset \dotsb$ in $F_{I,J,\sigma}$, where
					$M_{i+1}$ is obtained by adding to $M_i$ a single edge incident
					with $v_{i+1}$.  Suppose that we have just constructed $M_i$ for
					some $i \ge 0$. If $|M_i| \ge |\Epoor|/(cn) + |\Erich|/(cn) -
					0.9$, then we stop. If not, then we have $i \le m-1$, because
					$|M_i| = |M_0| + i$. Since $v_{i+1}$ has at least
					$\delta\sqrt\beta n$ neighbours in $F_{I,J,\sigma}$, we can pick
					one, say $u_{i+1}$, that is not contained in $V(M_i) \cup
					\{v_{i+2}, \dotsc, v_m\}$ (here we use the bound $|V(M_i)| + m
					\le 3|E_{I,J}| / (cn) + 1 \le (9 r \beta / c) n + 1 <
					\delta\sqrt\beta n$, which is a consequence of
					\Cref{obs:few-crossedges} and the assumption that $\beta \ll
					\delta$).  The new matching $M_{i+1}$ is defined as $M_i \cup \{
					v_{i+1} u_{i+1} \}$. We remark that our construction ensures
					that at each stage $v_{i+1}$ is not contained in $V(M_i)$, and
					so the process keeps running until we obtain a matching of a
					desired size.
					\Cref{claim:matching-I-J} is proved.
				\end{proof}
				
				\begin{claim}
					There exists $\sigma$ for which the capacity of every cut of
					$\Gsigmadi$ is at least $\sum_{i = 1}^s \sum_{j = 1}^s a_{ij} -
					0.9$.
				\end{claim}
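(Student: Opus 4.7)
My plan is to apply a union bound to \Cref{claim:matching-I-J} across all $4^s$ pairs $(I, J) \in 2^{[s]} \times 2^{[s]}$, and then translate the resulting vertex-cover lower bounds into cut-capacity lower bounds. The union bound works because $4^s$ pairs times a failure probability strictly less than $4^{-s}$ yields total failure probability strictly less than $1$; so with strictly positive probability, the conclusion of \Cref{claim:matching-I-J} holds simultaneously for every $(I, J)$. Only positive probability is needed here, not high probability. I will fix $\sigma$ on this good event.

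Given an arbitrary cut $C$ of $\Gsigmadi$, the key step will be to set
\[
I = \{i \in [s] : b_i \notin C\}, \qquad J = \{j \in [s] : t_j \notin C\},
\]
and observe that $C \cap V(F_{I,J,\sigma})$ is a vertex cover of $F_{I,J,\sigma}$. To see this, I take any edge $\partb{u}\partt{v}$ of $F_{I,J,\sigma}$ with $\partb{u} \in B_i$, $\partt{v} \in T_j$; necessarily $i \in I$, $j \in J$, and $i \neq j$ (the last because $\Gsigma$ contains no intra-clump edges by construction). The directed path $p \to b_i \to \partb{u} \to \partt{v} \to t_j \to q$ then exists in $\Gsigmadi$ and avoids both $b_i$ and $t_j$, so $C$ must contain $\partb{u}$ or $\partt{v}$. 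Applying the good event yields $|C \cap V(F_{I,J,\sigma})| \geq \sum_{i \in I}\sum_{j \in J} a_{ij} - 0.9$.

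To conclude, I will add up the capacity contributions. The $b_i$'s with $i \notin I$ contribute a total of $\sum_{i \notin I} \sum_j a_{ij}$, the $t_j$'s with $j \notin J$ contribute $\sum_{j \notin J} \sum_i a_{ij}$, and the vertex-cover contribution from $C \cap V(F_{I,J,\sigma})$ adds at least $\sum_{i \in I}\sum_{j \in J} a_{ij} - 0.9$. A routine inclusion-exclusion shows that these three sums together are at least $\sum_{i,j} a_{ij} - 0.9$, because each $a_{ij}$ is counted at least once in the combined sum (and in fact twice whenever $i \notin I$ and $j \notin J$).

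The main delicate point I anticipate is ensuring that the $-0.9$ slack from \Cref{claim:matching-I-J} passes through to the capacity bound without any accumulation; this requires the vertex-cover term to compensate \emph{exactly} for the quantity $\sum_{i \in I, j \in J} a_{ij}$ missing from the $b_i, t_j$ sums. Fortunately this matching is built into the statement of \Cref{claim:matching-I-J}, where the lower bound is precisely $\sum_{i \in I}\sum_{j \in J} a_{ij} - 0.9$; the convention $a_{ii} = 0$ also ensures that diagonal terms contribute nothing, which avoids any potential over-counting from paths that would be forbidden anyway.
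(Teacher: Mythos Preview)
Your proposal is correct and follows essentially the same approach as the paper: union bound over all $4^s$ pairs $(I,J)$, then for a given cut $C$ set $I=\{i:b_i\notin C\}$ and $J=\{j:t_j\notin C\}$, observe that $C\cap V(F_{I,J,\sigma})$ is a vertex cover of $F_{I,J,\sigma}$, and add up the three capacity contributions. One small wording slip: in your path argument you wrote that the path ``avoids both $b_i$ and $t_j$'', but the path passes through them; what you mean is that \emph{$C$} avoids $b_i$ and $t_j$ (by the choice of $I,J$), so $C$ must hit the path at $\partb{u}$ or $\partt{v}$.
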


				\begin{proof}
					Let $\sigma$ be a random ordering, chosen uniformly at random.
					For any $I,J \subset [s]$, let $\mathcal{E}_{I,J}$ be the event
					that $F_{I,J,\sigma}$ has no vertex cover of cardinality less
					than $\sum_{i \in I} \sum_{j \in J} a_{ij} - 0.9$. We know from
					the previous claim that $\mathbb{P}(\mathcal{E}_{I,J}) > 1 -
					4^{-s}$ for any $I,J$. Since there are $4^s$ choices for $I,J$,
					all events $\mathcal{E}_{I,J}$ occur simultaneously with
					positive probability.

					Suppose that $\mathcal{E}_{I,J}$ occurs for every $I, J \subset
					[s]$ and let $C$ be a cut of $\Gsigmadi$. Then
					$\mathcal{E}_{I,J}$ holds in particular for the choice $I = \{i
					\in [s] : b_i \not\in C\}, J = \{j \in [s] : t_j \not\in C\}$.
					Since $C$ disconnects $q$ from $p$, it in particular intersects
					all paths from $p$ to $q$ that visit $(\bigcup_{i \in I} B_i)
					\cup (\bigcup_{j \in J} T_i) = V(F_{I,J,\sigma})$, and hence $C
					\cap V(F_{I,J,\sigma})$ is a vertex cover of $F_{I,J,\sigma}$.
					Therefore,
					\begin{align*}
						\capacity(C)
						&= \sum_{i \not\in I} \capacity(b_i) +
						   \sum_{j \not\in J} \capacity(t_j) +
						   |C \cap V(F_{I,J,\sigma})| \\
						&\ge \sum_{i \not\in I} \sum_j a_{ij} +
						   \sum_i \sum_{j \not\in J} a_{ij} +
						   \sum_{i \in I} \sum_{j \in J} a_{ij} - 0.9 \\
						&\ge \sum_i \sum_j a_{ij} - 0.9,
					\end{align*}
					where the first inequality follows from the assumption that
					$\mathcal{E}_{I,J}$ occurs.
				\end{proof}

				We fix one instance of $\sigma$ for which the capacity of a minimum
				cut of $\Gsigmadi$ is at least $\sum_i \sum_j a_{ij} - 0.9$. The
				Max-Flow Min-Cut Theorem produces a flow $f$ on $\Gsigmadi$ with
				$\val(f) \ge \sum_i \sum_j a_{ij} - 0.9$. This flow induces a
				fractional matching in $\Gsigma$, as the capacity of vertices in $\Gsigma$ was set to $1$. Abusing the notation slightly, we
				denote this fractional matching also by $f$.

				\begin{claim}
					The fractional matching $f$ is \almostbal{0.9}.
				\end{claim}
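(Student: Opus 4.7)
The plan is to translate the max-flow guarantee of the previous claim directly into a bound on $\sum_i \disb(f,i)$, by means of an identity relating the combinatorial imbalance of each clump to its capacity imbalance in $\Gsigmadi$. The proof is essentially bookkeeping once the right quantities are in place.

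Write $\alpha_i = \sum_j a_{ij}$ and $\beta_i = \sum_j a_{ji}$, the capacities of $b_i$ and $t_i$ in $\Gsigmadi$. By flow conservation, all flow entering $B_i$ arrives from $b_i$ and all flow leaving $T_i$ exits through $t_i$, so the induced fractional matching satisfies $f(B_i) = f(b_i) \le \alpha_i$ and $f(T_i) = f(t_i) \le \beta_i$; moreover $\sum_i f(B_i) = \sum_i f(T_i) = \val(f)$. The previous claim (applied to the fixed $\sigma$) gives $\val(f) \ge \sum_{i,j} a_{ij} - 0.9$, and since $\sum_i \alpha_i = \sum_i \beta_i = \sum_{i,j} a_{ij}$, this yields $\sum_i (\alpha_i - f(B_i)) \le 0.9$ and $\sum_i (\beta_i - f(T_i)) \le 0.9$.

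The crux of the argument is the identity $|T_i| - |B_i| = \beta_i - \alpha_i$. This is immediate from $\liftG$ being $cn$-regular and bipartite: counting edges from $B_i$ to $\partt{V}$ two different ways gives $cn|B_i| = e_{\liftG}(B_i, T_i) + cn\alpha_i$, and the symmetric count on the $T_i$ side gives $cn|T_i| = e_{\liftG}(B_i, T_i) + cn\beta_i$, so subtracting yields the identity.

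With this identity,
\[
\disb(f, i) = \bigl|(|T_i| - f(T_i)) - (|B_i| - f(B_i))\bigr| = \bigl|(\beta_i - f(T_i)) - (\alpha_i - f(B_i))\bigr|,
\]
where both $\beta_i - f(T_i)$ and $\alpha_i - f(B_i)$ are non-negative by the capacity constraints. The triangle inequality combined with the two sum bounds then gives
\[
\sum_{i=1}^s \disb(f,i) \le \sum_{i=1}^s (\beta_i - f(T_i)) + \sum_{i=1}^s (\alpha_i - f(B_i)) \le 2\cdot 0.9,
\]
which (up to a constant factor that is absorbed by the choice of $0.9$ in the previous claim) furnishes the almost-balancing conclusion. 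There is no genuine obstacle; the only slightly non-obvious step is isolating the regularity-based identity $|T_i| - |B_i| = \beta_i - \alpha_i$, after which the rest follows by routine manipulation of the max-flow bound.
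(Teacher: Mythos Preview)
Your approach mirrors the paper's exactly: derive $|T_k|-|B_k|=\sum_i a_{ik}-\sum_j a_{kj}$ from the $cn$-regularity of $\liftG$, rewrite $\disb(f,k)$ as the absolute difference of the two non-negative slacks $x_k=\sum_j a_{kj}-f(B_k)$ and $y_k=\sum_i a_{ik}-f(T_k)$, and then bound using $\sum_k x_k\le 0.9$ and $\sum_k y_k\le 0.9$.

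The one discrepancy is in the final constant. Your triangle inequality yields $\sum_k\disb(f,k)\le\sum_k x_k+\sum_k y_k\le 1.8$. The paper instead writes $|x_k-y_k|\le\max(x_k,y_k)$ and then passes to $\sum_k\max(x_k,y_k)\le\max\bigl(\sum_k x_k,\sum_k y_k\bigr)\le 0.9$; but that second inequality is not valid in general (take $s=2$, $x=(0.9,0)$, $y=(0,0.9)$), so the argument in either version honestly delivers only $1.8$, as you obtained.

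Your remark that the extra factor of $2$ ``is absorbed by the choice of $0.9$ in the previous claim'' is the right fix, but it should be made explicit rather than waved through: the threshold must be strictly less than $1$ for Step~2 to work (an integer total imbalance below the threshold is forced to equal $0$), so one should rerun the previous claim with $0.45$ in place of $0.9$. Nothing in that argument is sensitive to the particular value of the constant, so this is routine.
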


				\begin{proof}
					It is clear from the way the directed graph $\Gsigmadi$ was set
					up that, for every $i \in [r]$, $f(B_i)$ does not exceed the
					capacity of $b_i$ in $\Gsigmadi$. That is, $\sum_j a_{ij} -
					f(B_i) \ge 0$.  Therefore,
					\begin{equation*}
						\sum_i \sum_j a_{ij} - 0.9
						\,\le\, \val(f)
						\,=\, f(B_1) + \dotsb + f(B_r)
						\,\le\, \sum_i \sum_j a_{ij},
					\end{equation*}
					from which we deduce that
					\begin{equation*}
						0 \le \sum_i \left( \sum_j a_{ij} - f(B_i) \right) \le
						0.9.
					\end{equation*}
					Similarly, we have $\sum_i a_{ij} - f(T_j) \ge 0$ for all
					$j$ and
					\begin{equation*}
						0 \le \sum_j \left( \sum_i a_{ij} - f(T_j) \right) \le
						0.9.
					\end{equation*}

					At this point it is important to remember that for all distinct
					$i,j$ we have $a_{ij} \, cn = e_{\liftG}(B_i, T_j)$. Also,
					$a_{ii} = 0$. Since $\liftG$ is $cn$-regular, for each $k \in
					[s]$ we have $|B_k| cn - |T_k| cn = \sum_j a_{kj} \, cn - \sum_i
						a_{ik} \, cn$,
					which can be rearranged to give
					\begin{align*}
						\disb(f,k)
						&= \left| \left( \sum_j a_{kj} - f(B_k) \right) -
								  \left( \sum_{i\vphantom{j}} a_{ik} - f(T_k)
								  \right)
						   \right| \\
						&\le \max \left\{ \sum_j a_{kj} - f(B_k), \;
										  \sum_i a_{ik} - f(T_k)
								  \right\}.
					\end{align*}
					Therefore, 
					\begin{equation*}
						\sum_k \disb(f,k) \le \max \left\{ \sum_k
						\left(\sum_j a_{kj} - f(B_k)\right), \sum_k
						\left(\sum_{i\vphantom{j}} a_{ik} - f(T_k)\right) \right\}
						\le 0.9,
					\end{equation*}
					as claimed.
				\end{proof}

				\step{2}{Converting the almost balancing fractional matching to a
				balancing matching}

				Let $w$ be any fractional matching in $\Gsigma$. We say that a
				vertex $v \in V(\Gsigma)$ is \emph{open} if $w(v) \in (0,1)$ and
				\emph{closed} if $w(v) \in \{0,1\}$. Similarly, we say that an edge
				$e \in E(\Gsigma)$ is \emph{open} if $w(e) \in (0,1)$ and
				\emph{closed} if $w(e) \in \{0,1\}$.

				We know that $\Gsigma$ has a \almostbal{0.9} fractional matching,
				namely, $f$. Let $f^\ast$ be a \almostbal{0.9} fractional matching
				in $\Gsigma$ that minimises the total number of open vertices and
				open edges.

				\begin{claim}
					The fractional matching $f^\ast$ is \almostbal{0} and has
					integer weights.
				\end{claim}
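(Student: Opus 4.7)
The strategy is the classical idea of shifting weights around cycles and paths in the subgraph of open edges, exploiting the minimality of $f^\ast$. Let me denote by $\Gsigma^\circ \subset \Gsigma$ the subgraph of open edges, and for each clump $i$ write $D_i(w) = w(B_i) - w(T_i) - |B_i| + |T_i|$, so that $\disb(w,i) = |D_i(w)|$. For a vertex $v \in V(\Gsigma)$, let $\sigma(v) = +1$ if $v \in \partb{V}$ and $-1$ otherwise, so that a shift of $+\epsilon$ in $w(v)$ changes $D_{i(v)}$ by $\sigma(v)\epsilon$, where $i(v)$ denotes the clump of $v$.

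First I would deal with the cycle case. Since $\Gsigma$ is bipartite, any cycle in $\Gsigma^\circ$ has even length, and the standard alternating shift (adding $+\epsilon, -\epsilon, +\epsilon, \ldots$ to consecutive edges) preserves $w(v)$ at every vertex of the cycle and hence preserves every $D_i$, so the matching remains \almostbal{0.9}. Choosing $\epsilon$ maximal among the bounds $w(e)$ and $1-w(e)$ on the cycle edges ensures that at least one edge becomes closed while no new open edges or open vertices are created, contradicting the minimality of $f^\ast$.

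Next I would handle the path case. Suppose $\Gsigma^\circ$ is a forest containing an edge, and let $P = v_0v_1 \ldots v_k$ be a maximal path in $\Gsigma^\circ$; by maximality, $v_0$ and $v_k$ each have exactly one incident open edge. Shifting $+\epsilon, -\epsilon, \ldots$ along $P$ leaves every internal vertex's weight unchanged, so the only $D_i$'s affected are $D_{i(v_0)}$ and $D_{i(v_k)}$. Writing $A = \sigma(v_0)$ and $B = \pm\sigma(v_k)$ for the signs with which $D_{i(v_0)}$ and $D_{i(v_k)}$ change per unit $\epsilon$, a case analysis on whether $i(v_0) = i(v_k)$ and on the signs of $D_{i(v_0)}, D_{i(v_k)}$ shows that one of the two possible shift directions satisfies $\sum_i |D_i| \le \sum_i |D_i(f^\ast)|$; the feasibility constraints $w(e) \in [0,1]$ and $w(v_0), w(v_k) \in [0,1]$ then restrict $\epsilon$ to a (positive) interval, and picking $\epsilon$ maximal in this interval closes at least one open edge or open endpoint while creating none. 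This again contradicts the minimality of $f^\ast$.

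Therefore $\Gsigma^\circ$ has no edges, meaning every edge weight of $f^\ast$ lies in $\{0,1\}$. Since $f^\ast$ is a fractional matching, each vertex weight is at most $1$ and is a sum of $0$'s and $1$'s, so $f^\ast$ is an integer matching. In particular, every $D_i(f^\ast)$ is an integer, so each $\disb(f^\ast,i)$ is a non-negative integer, and $\sum_i \disb(f^\ast,i) \le 0.9$ forces $\disb(f^\ast,i) = 0$ for every $i$. Thus $f^\ast$ is \almostbal{0} with integer weights, as claimed. The only part requiring care is the sign bookkeeping in the path case, but since we have two opposite shift directions available and only two clumps ($i(v_0)$ and $i(v_k)$) can have their imbalance affected, one of the two directions is always non-increasing for $\sum_i |D_i|$.
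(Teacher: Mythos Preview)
Your cycle argument is fine, but the path case has a genuine gap. You assert that ``one of the two directions is always non-increasing for $\sum_i |D_i|$'', yet this fails when $D_{i(v_0)} = D_{i(v_k)} = 0$ (with $i(v_0)\neq i(v_k)$): the shift then changes the total imbalance by $|A\epsilon| + |B\epsilon| = 2|\epsilon|$, which is strictly positive for every nonzero $\epsilon$ of either sign. Nothing in your setup rules this out. Although $v_0$ is a leaf of $\Gsigma^\circ$ and hence an open vertex, the clump $\liftA_{i(v_0)}$ may contain further open vertices --- lying in other components of $\Gsigma^\circ$, or even among the internal vertices of $P$ --- whose fractional contributions to $w(B_{i(v_0)}) - w(T_{i(v_0)})$ cancel, making $D_{i(v_0)}$ an integer, possibly zero. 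In that situation you may need $|\epsilon| > 0.45$ before any edge or endpoint closes, which would push $\sum_i |D_i|$ above $0.9$ and destroy the \almostbal{0.9} property.

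The paper handles precisely this obstacle by inserting \emph{fake} edges: within each clump it links all open vertices by a path, and then runs the cycle/path argument in the auxiliary graph $H$ on $V_{\text{open}}\cup V_{\text{full}}$ with edge set $E_{\text{open}}\cup E_{\text{fake}}$. A leaf of $H$ is then necessarily the \emph{unique} open vertex in its clump (it is not full and it touches no fake edge), so exactly one of $f^\ast(B_i), f^\ast(T_i)$ is non-integral there, forcing $\disb(f^\ast,i)>0$; with that guaranteed, the sign argument goes through. Your overall strategy is correct, but the missing ingredient is this grouping of the open vertices within each clump before choosing the path.
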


				\begin{proof}
					It suffices to show that $f^\ast$ has no open edges. Indeed,
					this would imply that $f^\ast$ has no open vertices, and so its
					imbalance is a whole number. However, by definition,
					$\disb(f^\ast) \le 0.9$, and so $\disb(f^\ast) = 0$, as
					required.

					We assume for contradiction that $f^\ast$ has at least one open
					edge.  Let $E_{\text{open}}$ and $V_{\text{open}}$ stand for the
					sets of, respectively, open edges and open vertices of $f^\ast$.
					There may be closed vertices that are incident with open edges;
					we call such vertices \emph{full} and denote their set by
					$V_{\text{full}}$.  Clearly, full vertices have weight $1$ and
					are incident with at least two open edges. We will now add new
					edges, which we call \emph{fake}, to $\Gsigma$. For every $i \in
					[s]$, we add a path spanning the open vertices contained in
					$\liftA_i$. In particular, if for some $i$ there is at most one
					open vertex in $\liftA_i$, then we do not create any fake edges
					in the clump $\liftA_i$. Let $E_{\text{fake}}$ stand for the set
					of fake edges that were added to $\Gsigma$.

					We create an auxiliary graph $H$ with vertices $V_{\text{open}}
					\cup V_{\text{full}}$ and edges $E_{\text{open}} \cup
					E_{\text{fake}}$.  First, suppose that $H$ contains a cycle $C$.
					Since $E_{\text{fake}}$ is a union of vertex-disjoint paths, $C$
					must contain at least one open edge. Fix a direction for $C$.
					For $e$ an open edge in $C$, we say that $e$ is \emph{upward} if
					it is directed from $\partb{V}$ to $\partt{V}$.  If $e$ is
					directed from $\partt{V}$ to $\partb{V}$, then we say that $e$
					is \emph{downward}.

					Let $\lambda > 0$ be a small positive number and let
					$f^\ast_\lambda$ be the fractional matching in $\Gsigma$
					obtained from $f^\ast$ by adding $\lambda$ to the weight of
					every upward open edge and subtracting $\lambda$ from the weight
					of every downward open edge.  We remark that $f^\ast_\lambda$ is
					a valid fractional matching, provided that $\lambda$ is small
					enough so that the modified weights of open edges and open
					vertices remain in the interval $[0,1]$; crucially, each full
					vertex in $C$ is incident with precisely one upward and one
					downward open edge, so its weight remains $1$. Moreover, we
					claim that $f^\ast_\lambda$ is \almostbal{0.9}. In fact, for
					every $i \in [s]$ we have $\disb(f^\ast_\lambda,i) =
					\disb(f^\ast,i)$. This can be seen by observing that every open
					edge in $C$ that enters the clump $B_i \cup T_i$ either
					contributes an additional $\lambda$ term to
					$f^\ast_\lambda(T_i)$ (if it is upward) or an additional
					$-\lambda$ term to $f^\ast_\lambda(B_i)$ (if it is downward) and
					so its added contribution to $f^\ast_\lambda(T_i) -
					f^\ast_\lambda(B_i)$ is $\lambda$. However, the next open edge
					along $C$ leaves $B_i \cup T_i$ and, by similar reasoning, its
					added contribution to $f^\ast_\lambda(T_i) -
					f^\ast_\lambda(B_i)$ is $-\lambda$.  The contributions cancel
					out. We conclude that $\disb(f^\ast_\lambda, i) = \disb(f^\ast,
					i)$, as claimed.  As $\lambda$ increases, eventually a point is
					reached where some open vertex or some open edge becomes closed.
					At that exact moment $f^\ast_\lambda$ has fewer open vertices
					and/or open edges than $f^\ast$, contradicting the minimality of
					$f^\ast$. Therefore, $H$ does not have cycles.

					Since $H$ is a non-empty forest, there exists a path $P$ joining
					two distinct vertices of degree $1$ in $H$, say $x$ and $y$. Suppose
					that $x \in \liftA_i$, $y \in \liftA_j$.  Since $x$ and $y$ have
					degree $1$ in $H$, they are not full and they are not incident
					with fake edges, which means that $x$ and $y$ are the unique open
					vertices in their respective clumps $\liftA_i$ and $\liftA_j$. In
					particular, $i \neq j$ and $P$ contains an open edge. Also,
					precisely one of $f^\ast(B_i)$ and $f^\ast(T_i)$ is an integer,
					and so $\disb(f^\ast, i) > 0$.  Similarly, $\disb(f^\ast, j) >
					0$.  Like in the case where $H$ had a cycle, we fix a direction
					for $P$ and partition the open edges in $P$ into \emph{upward}
					and \emph{downward} ones, depending on whether they go from
					$\partb{V}$ to $\partt{V}$ or the other way around. Let $\lambda
					\in \R$ be a number with small absolute value and define
					$f^\ast_\lambda$ in the same way as previously, that is, by
					giving the upward edges of $P$ additional weight $\lambda$ and
					downward edges $-\lambda$. With the same reasoning as before,
					$f^\ast_\lambda$ is a valid fractional matching provided that
					$\lambda$ is small.  Moreover, for every $m \in [s] \sm \{i,j\}$
					we have $\disb(f^\ast_\lambda,m) = \disb(f^\ast,m)$, also by an
					identical argument. However, the added contributions to
					$\disb(f^\ast_\lambda, i)$ and $\disb(f^\ast_\lambda, j)$ are
					non-zero.  In fact, having the additional $\pm\lambda$ term
					either decreases or further increases the imbalance of the
					clumps $\liftA_i, \liftA_j$ by exactly $|\lambda|$. More
					precisely, there exist constants $s_i, s_j \in \{-1, 1\}$ such
					that, for small $|\lambda|$, $\disb(f^\ast_\lambda,i) =
					\disb(f^\ast,i)+s_i\lambda$ and $\disb(f^\ast_\lambda,j) =
					\disb(f^\ast,j)+s_j\lambda$. Therefore, for small $|\lambda|$,
					$\disb(f^\ast_\lambda) = \disb(f^\ast) + (s_i + s_j) \lambda$.
					Depending on the sign of $s_i + s_j$ we choose $\lambda$ to be
					positive or negative, ensuring that $\disb(f^\ast_\lambda) \le
					\disb(f^\ast) \le 0.9$, which means that $f^\ast_\lambda$ is
					\almostbal{0.9}. Finally, we keep increasing the magnitude of
					$\lambda$ until some open vertex or open edge becomes closed.
					(Here it is important to note that the signs $s_i, s_j$ cannot
					change before at least one open vertex become closed, at which
					time we stop our process.) This contradicts the
					minimality of $f^\ast$.  Therefore, the auxiliary graph $H$ is
					empty, and the claim follows.
				\end{proof}

				Since all weights of $f^\ast$ are $0$ or $1$, $f^\ast$ gives rise to
				a matching $M$ in $\Gsigma$. Furthermore, since $f^\ast$ is
				\almostbal{0}, $M$ balances $\liftA_1, \dotsc, \liftA_s$.
				\Cref{lem:balance-Gsigma} follows.
		\end{proof}

	\subsection{Constructing the balancing paths in $G$}

	 	In this section we prove \Cref{lem:good-linear-forest}. Before turning to the proof, we mention the following proposition. A similar result can be deduced, for example, from Lemma 5.4 in \cite{debiasio-nelsen}. We include a short proof, for completeness, in \Cref{appen:connecting}.

		\begin{prop} \label{prop:short-connection}
			Let $\zeta \in (0,1)$ and let $H$ be a graph that has no $\zeta$-sparse cuts.
			Then, for any $R \subset V(H)$ with $|R| \le (\zeta / 6) |H|$ and any
			distinct vertices $x,y \in V(H) \sm R$, there exists a path in $H \sm R$ of
			length at most $3 / \zeta$, with ends $x$ and $y$.
		\end{prop}

		\begin{proof}[ of \Cref{lem:good-linear-forest}]

			The rough idea is as follows. We pull back a balancing matching $M$ of
			$\liftG$, as given by \Cref{lem:balance-Gsigma}, to $G$. The resulting
			subgraph $H_0 \subset G$ has maximum degree at most $2$, is acyclic
			and `overbalances' every \nearb{\beta} cluster $A_i$ (the reason for
			this is that every \nearb{\beta} $A_i$ gives rise to two clumps of
			$\liftG$, both of which are balanced by $M$; therefore, $A_i$ gets
			balanced `twice'). Since $M$ is small, $H_0$ is also small, but it may
			have many components and, as a result, many leaves. To obtain
			property~\ref{it:H-two-ends} in \Cref{lem:good-linear-forest}, in clusters with too many such
			vertices, we connect pairs of them by short paths. It turns out that in
			doing so we also fix the overbalancing issue.  Therefore, we get
			properties~\ref{it:H-two-ends}~and~\ref{it:H-balances}
			simultaneously.  The remaining three properties are mainly
			technicalities.  We use \Cref{prop:short-connection} to find the desired short paths in clusters.

			Fix an ordering $\sigma$ of $V(G)$ such that $\Gsigma$ contains a
			matching $M$ as given by \Cref{lem:balance-Gsigma}; that is, $M$ covers
			at most $|M| \le (\xi \zeta / 4) n$ vertices and, for each $i \in
			[r]$, it satisfies $|T_i \sm V(M)| = |B_i \sm V(M)|$.  Let $H_0$ be the
			subgraph of $G$ spanned by edges $uv \in E(G)$ for which $\partb{u}
			\partt{v}$ or $\partb{v} \partt{u}$ is in $M$. By construction of
			$G_\sigma$, it is impossible for both $\partb{u}\partt{v}$ and
			$\partb{v}\partt{u}$ to be in $M$, and therefore $e(H_0) = e(M)$.
			Trivially, $H_0$ has no isolated vertices.  Moreover, $H_0$ does not
			have cycles. Indeed, suppose to the contrary that $H_0$ contains a cycle
			$v_1 \dots v_\ell$. We may assume that $\partb{v_1}\partt{v_2}$ is in
			$M$. Since $M$ is a matching, $\partt{v_2}\partb{v_3} \not\in M$, and
			hence $\partb{v_2}\partt{v_3} \in M$. Similarly,
			$\partb{v_3}\partt{v_4}, \dotsc, \partb{v_{\ell-1}}\partt{v_\ell},
			\partb{v_\ell}\partt{v_1}$ are edges in $M$. However, this implies that
			$\sigma(v_1) < \dotsb < \sigma(v_\ell) < \sigma(v_1)$, giving a
			contradiction. 
			
			We now show that the number of leaves of $H_0$ in $A_i$ is even for
			every $i \in [r]$. For any subgraph $F \subset G$ and any set $U \subset
			V(G)$ we define $d_F(U) = \sum_{v \in U} d_F(v)$.  We claim that
			$d_{H_0}(A_i)$ is even for every $i \in [r]$. Indeed, if $A_i$ is
			\farb{\gamma}, then $d_M(A_i^{(1)}) = d_M(A_i^{(2)})$, as $M$ balances
			the balanced bipartite graph with bipartition $\{A_i^{(1)},
			A_i^{(2)}\}$, thus implying that $d_{H_0}(A_i) = d_M(A_i^{(1)}) +
			d_M(A_i^{(2)}) = 2d_M(A_i^{(1)})$. Now suppose that $A_i$ is
			\nearb{\beta} and denote its prescribed bipartition by $\{X_i, Y_i\}$.
			Since $M$ balances the two bipartite graphs with bipartitions
			$\{X_i^{(1)}, Y_i^{(2)}\}$ and $\{Y_i^{(1)}, X_i^{(2)}\}$, we have
			$d_{H_0}(X_i) - d_{H_0}(Y_i) = 2(|X_i| - |Y_i|)$. Either way, we see
			that $d_{H_0}(A_i)$ is even.  Since all non-leaves in $H_0$ have 
			degree $2$, we find that the number of leaves of $H_0$ in $A_i$
			is even, as desired.

			We proceed by extending $H_0$ to linear forests $H_0 \subset H_1 \subset
			\dotsb \subset H_m$ (for some $m \ge 0$) where, for each $j \in [m]$,
			$H_j$ is obtained from $H_{j-1}$ by adding a short path contained in
			some cluster $A_i$, joining two leaves of $H_{j-1}$. We stop when we
			reach a linear forest $H_m$ that satisfies
			property~\ref{it:H-two-ends}. 

			Here is a more precise description of this process. Suppose that we have
			constructed linear forests $H_0 \subset H_1 \subset \dotsb \subset
			H_{j-1}$ where $H_t$ contains an even number of leaves in $A_i$ for
			every $t \in \{0, \dotsc, j-1\}$ and every $i \in [r]$. Suppose that
			$H_{j-1}$ does not satisfy property~\ref{it:H-two-ends}. For
			convenience, we write $L = \{ v \in V(H_{j-1}) : v \text{ is a leaf of }
			H_{j-1} \}$. We pick $i \in [r]$ such that $|A_i \cap L| \neq 0, 2$, so
			$|A_i \cap L| \ge 4$.  Since every component of $H_{j-1}$ is a path (and
			so contains two leaves), there exist vertices $x,y \in A_i \cap L$ that
			are in different components of $H_{j-1}$.  By
			\Cref{prop:short-connection}, $G[A_i]$ contains a path $P_j$ of length at
			most $3 / \zeta$, with ends $x,y$ and whose vertex set does not
			intersect $V(H_{j-1}) \sm \{x,y\}$.  We set $H_j = H_{j-1} \cup P_j$ and
			note that our way of choosing $x,y$ ensures that $H_j$ is a linear
			forest. Moreover, since the set of leaves of $H_j$ is the set of leaves
			of $H_{j-1}$ minus $\{x,y\}$, the property that every cluster contains
			an even number of leaves still holds. This also implies that eventually
			we will find a linear forest $H_m$ that satisfies
			property~\ref{it:H-two-ends}.

			To justify the application of \Cref{prop:short-connection} in the
			previous paragraph, we note that, by our inductive construction,
			$|H_{j-1}| \le |H_0| + (3 / \zeta)(j-1)$. Moreover, since $H_{j-1}$ has
			$2(j-1)$ fewer leaves than $H_0$, we have $|H_0| - 2(j-1) \ge 0$, which
			implies that $j-1 \le |H_0|/2$, and therefore $|H_{j-1}| \le
			|H_0|(3/(2\zeta) + 1) \le (\xi \zeta / 4)(2 / \zeta) n^2 \le (\zeta /
			6) n$, as needed.

			It is clear that $H_m$ satisfies
			properties~\ref{it:H-no-isolated}~and~\ref{it:H-two-ends}. Also, by
			the same argument as above, $|H_m| \le (\xi\zeta/4)(2 / \zeta) n \le
			(\xi/2) n$. We now focus on modifying $H_m$ so that it also satisfies
			properties~\ref{it:H-ends-equally}~and~\ref{it:H-balances}. Let $i
			\in [r]$ be the index of an arbitrary \nearb{\beta} cluster $A_i$ and
			denote the prescribed bipartition of $A_i$ by $\{X_i, Y_i\}$.  First,
			suppose that $X_i$ and $Y_i$ have the same number $t$ of
			leaves of $H_m$, so $t \in \{0,1\}$. Then $|V(H_m) \cap X_i| - |V(H_m) \cap Y_i| =
			(d_{H_m}(X_i) + t)/2 - (d_{H_m}(Y_i) + t)/2 = |X_i| - |Y_i|$, as
			$d_{H_0}(X_i) - d_{H_0}(Y_i) = 2(|X_i| - |Y_i|)$ (since $M$ balances the
			two clumps corresponding to $A_i$) and $d_{H_j}(X_i) - d_{H_j}(Y_i)$ is
			the same for all $j \in [m]$, because for each $j \in [m]$, the path $P_j$ that is added to $H_{j-1}$ to form $H_j$ is contained in one of the \nearb{\beta} clusters $A_i$, and thus $d_{P_j}(X_i) = d_{P_j}(Y_i)$ for each $i \in [r]$ such that $A_i$ is \nearb{\beta}. It follows that
			properties~\ref{it:H-ends-equally}~and~\ref{it:H-balances} hold in
			this case. So, without loss of generality, we assume that both leaves of $H_m$ are in $X_i$
			and denote them by $x, x'$. Since $x$ has at least $(\delta / 2) n >
			|H_m|$ neighbours in $Y_i$, it has a neighbour $y \in Y_i \sm V(H_m)$.
			We define $e_i = xy$, with the intention of adding this edge to $H_m$ to
			obtain the desired linear forest $H$. Clearly, $d_{H_m \cup
			\{e_i\}}(X_i) - d_{H_m \cup \{e_i\}}(Y_i) = d_{H_m} (X_i) - d_{H_m}
			(Y_i)$, $x'$ is the unique leaf of $H_m \cup \{e_i\}$ in $X_i$ and $y$
			is the unique such vertex in $Y_i$. The same calculation as in the
			previous case gives $|V(H_m \cup \{e_i\}) \cap X_i| - |V(H_m \cup
			\{e_i\}) \cap Y_i| = |X_i| - |Y_i|$.

			The final definition of $H$ is as follows: it is the subgraph of $G$
			spanned by the edges $E(H_m) \cup \{ e_i : i \in [r] \text{ is such
			that } e_i \text{ is defined}\}$. It follows from the construction of
			$H_m$ and the $e_i$'s that $H$ is a linear forest satisfying
			properties~\ref{it:H-no-isolated}-\ref{it:H-balances}.  Furthermore,
			$|H| \le |H_m| + r \le (\xi/2) n + r \le \xi n$, and so
			property~\ref{it:H-small} also holds.  This completes the proof of
			\Cref{lem:good-linear-forest}.
		\end{proof}

\section{Concluding remarks} \label{sec:conclusion}

	In this paper we prove that the vertices of every $d$-regular $n$-vertex
	graph, where $d \ge cn$ and $n \ge n_0(c)$, can be partitioned into at most
	$\floor{n / (d+1)}$ cycles. It is natural to wonder whether this lower bound
	on $d$ can be lowered. We believe that, with our methods, one could prove
	this result for $d \ge cn/ \sqrt{\log \log n}$. Indeed, the improvement comes from an improved version of \Cref{thm:robust-expanders-hamilton}, proved by Lo and Patel \cite{lo-patel}, which allows for the robust out-expander to have minimum semi-degree $n^{1 - 1/13}$, as well as an improved version of \Cref{lem:good-partition} (which we do not present here, but the proof should be similar). Interestingly, the main obstruction to further lowering the lower bound on the degree appears to be  \Cref{lem:good-partition}.\footnote{The reason is that in our proof we consider a sequence $\mu_1, \ldots, \mu_r$, where we need $\mu_1$ to be at least $\mu_r^{C^{r^2}}$, for some (large) constant $C$. Since we also require that $\mu_r$ is bounded away from $1$ and $\mu_1 \ge 1/n^2$, we obtain $r = O(\sqrt{\log \log n})$, so for our proof to work we need $d = O(n / \sqrt{\log \log n})$.}
	In particular, we think that if \Cref{lem:good-partition} could be strengthened to allow for a degree as small as $n^{1 - \eps}$, for a constant $\eps > 0$, then the main result for regular graphs with degree at least $n^{1 - \delta}$, for a constant $\delta > 0$, would follow. A solution of this problem for much smaller $d$, say $d = \sqrt{n}$, seems to be out of reach.

	It would also be interesting to determine if a version of our results holds
	for regular directed graphs or for regular oriented graphs.
	Another possible direction is to consider bipartite
	versions of the Bollob\'as and H\"aggkvist conjecture (see \cite{bollobas,jackson}). H\"aggkvist
	\cite{haggkvist-bip} conjectured that every bipartite $d$-regular $2$-connected
	bipartite graph on $n$ vertices, where $d \ge n/6$, is Hamiltonian. This was
	essentially verified by Jackson and Li \cite{jackson-li-bip} who proved this
	statement for $d \ge (n+38)/6$. Recently, Li \cite{li-bip} conjectured that
	every $d$-regular $3$-connected bipartite graph on $n$ vertices, with $d \ge
	n/8$, is Hamiltonian. We suspect that our methods could be useful for this
	problem.

\subsection*{Acknowledgements}
	We would like to thank the anonymous referees for their helpful comments; in particular, suggestions made by one of the referees allowed us to significantly shorten the proof of \Cref{lem:rob-ham}.

	\bibliography{reg-path}
	\bibliographystyle{amsplain}
\appendix

\section{Proof of \Cref{prop:short-connection}} \label{appen:connecting}

	\begin{proof}[ of \Cref{prop:short-connection}]
		Fix $R \subset V(H)$ with $|R| \le (\zeta / 6) |H|$ and let $x,y \in V(H)
		\sm R$ be distinct vertices.  We first observe that $H \sm R$ is connected.
		Indeed, if $V(H) \sm R$ admits a partition into non-empty sets $X, Y$ with
		no \crossedges{X}{Y} edges, then the number of \crossedges{X}{(Y \cup R)}
		edges is at most $|X||R|$. We may assume that $|Y| \ge |X|$, and hence that
		$|Y \cup R| \ge |H|/2$, which implies that $|R| \le (\zeta / 3) |Y \cup R|$.
		However, this contradicts the assumption that the number of
		\crossedges{X}{(Y \cup R)} edges in $H$ is at least $\zeta |X||Y \cup R|$.

		Now, we partition the vertices of $H \sm R$ into sets according to their
		distance to $x$. That is, for all $i \ge 0$ we set 
		\begin{equation*}
			L_i = \{v \in V(H) \sm R : \text{the shortest path from } x \text{ to }
			v \text{ in } H \sm R \text{ has } i \text{ edges} \}
		\end{equation*}
		Since $H \sm R$ is finite and connected, there exists a maximum value $a$
		for which $L_a$ is non-empty and, for that value, $L_0, \dotsc, L_a$
		partition $V(H) \sm R$.

		Our aim is to show that $a \le 3 / \zeta$, so suppose that this is not the
		case. In particular, we have $a \ge 3$. Let $j$ be an index in the set
		$[a-1]$ for which $|L_j|$ is minimal. We partition $V(H) \sm R$ into two
		sets $X,Y$, defined by
		\begin{equation*}
			\begin{cases}
				\text{if } j \ge \frac{a}{2}, \text{ then }
					X = L_0 \cup \dotsb \cup L_j \text { and }
					Y = L_{j+1} \cup \dotsb \cup L_a, \\
				\text{if } j < \frac{a}{2}, \text{ then }
					X = L_j \cup \dotsb \cup L_a \text { and }
					Y = L_0 \cup \dotsb \cup L_{j-1}.
			\end{cases}
		\end{equation*}
		In either case $X,Y$ are non-empty sets such that there are no edges between
		$X \sm L_j$ and $Y$. Moreover, $X$ contains at least $a/2$ of the sets $L_1,
		\dotsc, L_{a-1}$, and so $|X| \ge |L_j|a/2$. Therefore, the number of
		\crossedges{X}{Y} edges is at most $|L_j||Y| \le (2/a)|X||Y|$.

		We attach $R$ to the larger one of the sets $X,Y$. For the following
		calculation we may assume that $|X| \ge |Y|$, in which case we consider the
		partition of $V(H)$ into sets $X \cup R, Y$. Since $|X \cup R| \ge |H| / 2$
		and $|R| \le (\zeta / 6) |H| \le (\zeta / 3) |X \cup R|$, the number of
		\crossedges{R}{Y} edges is at most $(\zeta / 3) |X \cup R||Y|$. Hence, the
		number of \crossedges{(X \cup R)}{Y} edges does not exceed $(2/a)|X||Y| +
		(\zeta/3)|X \cup R||Y| \le (2/a + \zeta/3)|X \cup R||Y|$. Therefore, we have
		$2/a + \zeta/3 \ge \zeta$, which implies that $a \le 3 / \zeta$, as desired.
	\end{proof}

\end{document}